\theoremstyle{plain} 
\newtheorem{theorem}{Theorem}
\newtheorem*{theorem*}{Theorem}
\newtheorem{prop}[theorem]{Proposition}
\newtheorem{lemma}[theorem]{Lemma}
\newtheorem{coro}[theorem]{Corollary}
\theoremstyle{definition} \newtheorem{definition}{Definition}\theoremstyle{remark} \newtheorem*{remark*}{Remark}
\theoremstyle{definition} \newtheorem*{ack}{Acknowledgment}
\author{Chuanhao Wei}
\address{Department of Mathematics, University of Utah}
\email{wei@math.utah.edu}
\title[Log-Comparison with smooth boundary divisor in MHM]{Logarithmic Comparison with smooth boundary divisor in mixed Hodge modules}
\date{}
\begin{document}
\maketitle

\section{Introduction}
The main goal of this paper is to study certain filtered log-$\mathscr{D}$-modules that underlie the (dual) localization of Saito's mixed Hodge modules along a smooth hypersurface, (or more generally, it admits a multi-indexed Kashiwara-Malgrange filtration with respect to a normal crossing divisor as defined in Section 4,) and show that they also behave well under the direct image functor and the duality functor in the derived category of filtered log-$\mathscr{D}$-modules. We will apply the results of this paper to prove a natural and substantial generalization of the result of M. Popa and C. Schnell \cite{PS14} in the log setting. This generalization will appear in \cite{W17b}. Another application for the results in this paper is to simplify the proof of Viehweg's hyperbolicity for families of smooth varieties of general type in \cite{PS17}, and to generalize the result to the case of log-smooth families. Some other potential applications can also be expected in studying birational geometry of families of log-pairs over a log-smooth variety, e.g. subadditivity of log-Kodaira dimensions, \cite{Fuj17}.

We will consider right (log-)$\mathscr{D}$-modules in this paper, if not otherwise specified. The mixed Hodge modules that we are discussing here are all assumed to be algebraic. In particular, they are extendable \cite[\S 4]{Sa90}. We mainly follow the notations that appeared in \cite{SS16}. In particular, $\widetilde{\mathscr{D}}$ is the Rees algebra induced by the filtration $F$ on $\mathscr{D}$ given by the degree of the differential operators. We denote $\widetilde{\mathbb{C}}_X$ and $\widetilde{\mathcal{O}}_X$ the corresponding Rees algebra induced by the trivial filtration. See Section 2 for more details. We say a strict right $\widetilde{\mathscr{D}}$-module is a mixed Hodge module if it underlies a mixed Hodge module in the sense of Saito, \cite{Sa88} \cite{Sa90}, forgetting the weight filtration. All algebraic varieties that we work with in this paper are smooth and over the complex number field $\mathbb{C}$. 

Fix a normal crossing divisor $D$ on $X$. Let $\mathscr{D}_{(X,D)}$ be the ring of log-differential operators of the log-smooth pair $(X,D)$, which is canonically a sub-ring of $\mathscr{D}_X$. Similarly, denote $\widetilde{\mathscr{D}}_{(X,D)}$ the Rees algebra, induced by the filtration $F$ given by the degree of the differential operators.
The two $\widetilde{\mathscr{D}}_{(X,D)}$-modules that we are most interested in in this paper are the following:
\begin{definition}\label{D:1}
Let $\mathcal{M}$ be a right $\widetilde{\mathscr{D}}_X$-module that underlies a mixed Hodge module on a smooth variety $X$. Assume that $\mathcal{M}$ admits a V-compatible multi-indexed Kashiwara-Malgrange filtration with respect to the normal crossing divisor $D$, as in Definition \ref{D:V strict}. (In particular, it is the case if $D$ is a smooth divisor; or if $\mathcal{M}$ is of normal crossing type, with respect to $D'$, a normal crossing divisor that contains $D$.)
Then we denote:
\begin{align*}
\mathcal{M}[* \log D]&:=\mathbf{V}^{D}_{\mathbf{0}}\left(\mathcal{M}[*D]\right),\\
\mathcal{M}[! \log D]&:=\mathbf{V}^{D}_{<\mathbf{0}}\left(\mathcal{M}[!D]\right).
\end{align*}
See (\ref{E: def v fil}) and (\ref{E: def v fil'}) in Section 4, for the definition of the filtration $\mathbf{V}^{D}$. Note that, we have $\mathbf{V}^{D}_{<\mathbf{0}}\left(\mathcal{M}[!D]\right)=\mathbf{V}^{D}_{<\mathbf{0}}\left(\mathcal{M}[*D]\right)$. We pick the left one in the definition just for symmetry.

\end{definition}

We first generalize the logarithmic comparison theorem of Grothendieck-Deligne into the language of mixed Hodge modules. 
\begin{theorem}\label{T:Comparison Theorem Sp}
Under the assumptions in Definition \ref{D:1}, we have the following canonical quasi-isomorphisms
\begin{align*}
\textsl{Sp}_{\left(X,D\right)}\left(\mathcal{M}[* \log D]\right)&\simeq \textsl{Sp}_X\left(\mathcal{M}[*D]\right),\\
\textsl{Sp}_{\left(X,D\right)}\left(\mathcal{M}[! \log D]\right)&\simeq \textsl{Sp}_X\left(\mathcal{M}[!D]\right),
\end{align*}
in $DG\left(\widetilde{\mathbb{C}}_X\right)$, the derived category of graded $\widetilde{\mathbb{C}}_X$-modules. (See (\ref{E: log Sp}) in Section 2 for the definition of the $Sp$ functor.)
\end{theorem}

One special case of this theorem is that when $\mathcal{M}\big|_{X\setminus D}$ is an admissible variation of mixed Hodge structures, after a right-to-left side changing functor, replacing the Spencer functor by the de Rham functor, and forgetting the filtration, we recover the classical logarithmic comparison theorem: 
\begin{align*}
\textsl{DR}_{\left(X,D\right)}\left(\mathcal{V}^{*}\right)\simeq \mathbf{R}i_*\textsl{DR}_{X}\left(\mathcal{V}\right), (\text{resp. } \mathbf{R}i_!\textsl{DR}_{X}\left(\mathcal{V}\right),)
\end{align*}
where  $i:X\setminus D\to X$ is the natural embedding, $\left(\mathcal{V}^{*},\nabla^*\right)$ is Deligne's canonical extension that admits a logarithmic connection with
$$\text{Re}\{\text{eigenvalues of}\ \text{Res}_{D_i}\nabla^*\}\subset \left[0,1\right),$$
$$(\text{resp. } \text{Re}\{\text{eigenvalues of}\  \text{Res}_{D_i}\nabla^*\}\subset \left(0,1\right],)$$
and 
\begin{align*}
\textsl{DR}_{\left(X,D\right)}\left(\mathcal{V}^{*}\right):=[0\to \mathcal{V}^{*}\to \mathcal{V}^{*}\otimes \Omega^1_X &\left(\text{log}\ D\right)\to... \\ 
... &\to\mathcal{V}^{*}\otimes \Omega^{d_X}_X\left(\text{log}\ D\right)\to 0]
\end{align*}
is the log-de Rham complex of $\mathcal{V}^{*}$.
We refer to \cite{Wu16} for a survey on this topic. See also \cite[\S 3.]{Sa90}, for an exposition of the extensions in terms of mixed Hodge modules of normal crossing type.

Let $D=D_1+D_2+...+D_n$ be the decomposition of irreducible components. Given a subset $S\subset \{1,...,n\}$, we denote $D_S=\sum_{i\in S}D_i$.
The previous theorem can be immediately deduced by the following theorem, which is about the vanishing of the higher derived tensor operator:
\begin{theorem}\label{T:Comparison Theorem}
Given two subset $S\subset I\subset \{1,...,n\}$ and a mixed Hodge module $\mathcal{M}$, admitting a V-compatible multi-indexed Kashiwara-Malgrange filtration with respect to $D$, as in Definition \ref{D:V strict}, we have the following two quasi-isomorphisms in $DG_{\text{coh}}\left(\widetilde{\mathscr{D}}_{\left(X,D_X\right)}\right)$:
\begin{align}
\mathcal{M}[*D][* \log D_I]\otimes^{\mathbf{L}}_{\widetilde{\mathscr{D}}_{\left(X,D_I\right)}} \widetilde{\mathscr{D}}_{\left(X,D_S\right)} &\simeq  \mathcal{M}[*D][* \log D_S].\label{E:* comparison}\\
 \mathcal{M}[!D][! \log D_I]\otimes^{\mathbf{L}}_{\widetilde{\mathscr{D}}_{\left(X,D_I\right)}} \widetilde{\mathscr{D}}_{\left(X,D_S\right)} &\simeq   \mathcal{M}[!D][! \log D_S].\label{E:! comparison}
\end{align}
\end{theorem}

Note that, given any divisor $D$, the non-smooth locus of $D$ has codimension $\geq 2$ in $X$, hence the two isomorphisms always hold for any mixed Hodge module on a big open set of $X$. One big advantage of thinking of mixed Hodge modules is due to the fact that they behave well under the direct image functor of projective morphsims. We say that $f:\left(X,D^X\right)\to \left(Y, D^Y\right)$ is a projective morphism of log-smooth pairs if $X\to Y$ is a projective morphism and $f^{-1}D^Y\subset D^X$. The log-$\widetilde{\mathscr{D}}$-modules we considered above also behave well under direct image functors in the following sense:

\begin{theorem}\label{T:direct image}
Fix a projective morphism of log-smooth pairs $f:\left(X,D^X\right)\to \left(Y, D^Y\right)$, and a mixed Hodge module $\mathcal{M}$ admitting a V-compatible multi-indexed Kashiwara-Malgrange filtration with respect to $D^X$. Assume $D^Y$ is smooth. Then we have
\begin{align*}
\mathcal{H}^i f_\# \left( \mathcal{M}\left[*\log D^X\right]\right)&= \left(\mathcal{H}^i f_+\mathcal{M}\right)\left[* \log D^Y\right].\\
\mathcal{H}^if_\# \left(\mathcal{M}\left[!\log D^X\right]\right)&= \left(\mathcal{H}^i f_+\mathcal{M}\right)\left[! \log D^Y\right].
\end{align*}
In particular, the functor $f_\#$ is strict on $\mathcal{M}\left[*\log D^X\right]$ and $\mathcal{M}\left[!\log D^X\right]$.
\end{theorem}

In the case that $D^Y$ is only normal crossing instead of smooth, we need very strong extra conditions on the complex $f_+\mathcal{M}$, and see the remarks after the proof of the theorem in Section 6. However, in some birational geometry applications, we may be only interested in the behavior of the mixed Hodge module on $Y$ out side of a codimension $2$ subset. Hence we have $D^Y$ being smooth, by ignoring all singular locus of $D^Y$. In particular, in \cite{W17b}, we apply this theorem to $\widetilde{\omega}_X[!\log D^X]=\widetilde{\omega}_X$, where the first $\widetilde{\omega}_X$ is the $\widetilde{\mathscr{D}}$-module underlies the constant Hodge module, instead of considering the $\widetilde{\mathscr{D}}$-module direct image of $\widetilde{\omega}_X$ in \cite{PS14}. Even in the non-log setting, this simplifies the proof.

We also show that such log-$\widetilde{\mathscr{D}}$-modules behave well under the duality functor:
\begin{theorem}\label{T:dual}
Let $\mathcal{M}$ underlie a mixed Hodge module on $X$, admitting a V-compatible multi-indexed Kashiwara-Malgrange filtration with respect to a normal crossing divisor $D$. Let $\mathcal{M}'$ underlie the dual mixed Hodge module of $\mathcal{M}$:
$$\mathcal{M}':=\mathbf{R}\mathcal{H}om_{\widetilde{\mathscr{D}}_X}\left(\mathcal{M}, \widetilde{\omega}[d_X]\otimes \widetilde{\mathscr{D}}_X \right).$$
Then $\mathcal{M}'$ also admits a V-compatible multi-indexed Kashiwara-Malgrange filtration, with respect to $D$, and further
\begin{align*}
\mathbf{D}_{(X,D)} \left(\mathcal{M}[*\log D]\right)&\simeq \mathcal{M}'[!\log D]\\
\mathbf{D}_{(X,D)} \left(\mathcal{M}[!\log D]\right)&\simeq \mathcal{M}'[*\log D]
\end{align*}
(See (\ref{E:dual definition}) in Section 2 for the definition of the duality functor $\mathbf{D}_{(X,D)}$.)
In particular, we have that both $\mathbf{D}_{(X,D)} \left(\mathcal{M}[*\log D]\right)$ and $\mathbf{D}_{(X,D)} \left(\mathcal{M}[!\log D]\right)$ are strict.
\end{theorem}

The existence of the Hodge filtration and its compatibility with the Kashiwara-Malgrange filtration is essential to prove the theorems. By keeping track of the Hodge filtration and the strictness properties that we obtain, we can also get some explicit formulae for the associated graded modules (\cite[2.1]{Sa88}): Corollary \ref{C:direct image gr}, Corollary \ref{C:dual gr}, which can be very useful in some geometric applications, as in \cite{W17b}.

We first fix the notations in Section 2, 3, 4. They will be stated in the general setting of filtered (log-)$\mathscr{D}$-modules. In Section 5, we prove the comparison theorem. The theorems about the direct image functor and duality functor will be proved in Section 6 and 7, respectively.

\begin{ack}
The author would like to express his gratitude to his adviser Christopher Hacon for suggesting this topic and useful discussions. The author thanks Linquan Ma, Christian Schnell, Lei Wu and Ziwen Zhu for answering his questions, and an anonymous referee that suggests a method, which is showing here, that simplifies the proof of Lemma \ref{L:vanishing by torsion freeness}.

During the preparation of this paper, the author was partially supported by DMS-1300750 and a grant from the Simons Foundation, Award Number 256202.
\end{ack}

\section{Preliminaries}
In this section, we first define the log-$\mathscr{D}$-module corresponding to a log-smooth pair. Then we recall the notions appeared in \cite[Appendix A.]{SS16}, but in the log $\widetilde{\mathscr{D}}$-modules setting. We will also recall some basic functors: the Spencer functor, the pushforward functor and the duality functor, on the derived category of log-$\widetilde{\mathscr{D}}$ modules. If we set the boundary divisor $D=0$, we will get the same functors on the derived category of $\widetilde{\mathscr{D}}$ modules. 

Given a log-smooth pair $(X,D)$, i.e., a smooth variety $X$ with a  normal crossing divisor $D$ on $X$, with $D=D_1+...+D_n$ being its decomposition of irreducible components. Denote $\dim X=d_X$. Working locally on $X$, we can assume that $X$ is just a polydisc $\Delta^{d_X}$ in $\mathbb{C}^{d_X}$. Let $x_1,...,x_{d_X}$ be a local analytic coordinate system on $X$ and $D$ is the simply normal crossing (SNC) divisor that defined by $y:=x_1\cdot...\cdot x_n=0$. Let $\partial_1,...,\partial_n$ be the dual basis of $dx_1,...,dx_{d_X}$. We define $\mathcal{T}_{\left(X,D\right)}$, the sheaf of the log-tangent bundle on $\left(X,D\right)$, to be the locally free sheaf that is locally generated over $\mathscr{O}_X$ by $\{x_1\partial_1,...,x_n\partial_n,\partial_{n+1},...,\partial_{d_X}\}$. $\mathcal{T}_{\left(X,D\right)}$ is naturally a sub-sheaf of $\mathcal{T}_X$, the sheaf of the tangent bundle over $X$. Similarly, we define $\mathscr{D}_{\left(X,D\right)}$, the sheaf of logarithmic differential operators on $\left(X,D\right)$, to be the sub-$\mathscr{O}_X$-algebra of $\mathscr{D}_X$ that is locally generated by $\{x_1\partial_1,...,x_n\partial_n,\partial_{n+1},...,\partial_{d_X}\}$. If $D=0$, $\mathscr{D}_{\left(X,D\right)}=\mathscr{D}_X$.

There is a natural filtration $F$ on $\mathscr{D}_X$ given by the degree of the differential operators. 
We denote 
$$\widetilde{\mathscr{D}}_{X}=R_F \mathscr{D}_{X}:=\oplus_p F_p\mathscr{D}_{X} ,$$
 the Rees algebra induced by $\left(\mathscr{D}_X,F\right).$ 
For any differential operator $\partial_t\in F_1\mathscr{D}_{X}$, we denote 
$$\widetilde{\partial}_t:=\partial_t\cdot z\in R_F \widetilde{\mathscr{D}}_{X}.$$ 

Denote by
$MG\left(\widetilde{\mathscr{D}}_{X}\right)$, the category of graded $\widetilde{\mathscr{D}}_{X}$ modules whose morphisms are graded morphisms of degree zero, 
and we call it the category of associated Rees modules of the category of filtered $\mathscr{D}_{X}$-modules $MF\left(\mathscr{D}_{X}\right)$.  Note that $MG\left(\widetilde{\mathscr{D}}_{X}\right)$ is an abelian category. See \cite[Definition A.2.3]{SS16} for details.
Hence, we can define the derived category of $MG\left(\widetilde{\mathscr{D}}_{X}\right)$, and we denote it by $DG\left(\widetilde{\mathscr{D}}_{X}\right)$. We also use $D^*G\left(\widetilde{\mathscr{D}}_{X}\right), *\in\{-, +, b\}$ to denote the bounded above, bounded below, bounded condition respectively on the derived category.

Recall that, we say $\mathcal{M}\in MG\left(\widetilde{\mathscr{D}}_{X}\right)$ is a coherent $\widetilde{\mathscr{D}}_{X}$-module if $\mathcal{M}$ is locally finitely presented, i.e. if for any $x\in X$, there
exists an open neighborhood $U_x$ of $x$ and an exact sequence:
$$\widetilde{\mathscr{D}}_{X}^q\big|_{U_x}\to \widetilde{\mathscr{D}}_{X}^p\big|_{U_x}\to \mathcal{M}\big|_{U_x}.$$
We denote by $MG_{\text{coh}}\left(\widetilde{\mathscr{D}}_{X}\right)$, the full subcategory $MG\left(\widetilde{\mathscr{D}}_{X}\right)$ with the objects that are coherent. Note that we know $MG_{\text{coh}}\left(\widetilde{\mathscr{D}}_{X}\right)$ is also an abelian category. We refer to \cite[A.9. A.10.]{SS16} for a discussion on this topic.

Fix $\mathcal{M}\in MG\left(\widetilde{\mathscr{D}}_{X}\right)$, we can write $\mathcal{M}_p$, the $p$-th graded piece as 
$$\mathcal{M}_p=F_p\mathcal{M}\cdot z^p,$$
where $F_p\mathcal{M}$ is an $\mathscr{O}_X$-module. If $\mathcal{M}$ is a coherent $\widetilde{\mathscr{D}}_{X}$-module, then $F_p\mathcal{M}$ is a coherent $\mathscr{O}_X$-module.

Similarly, we have a natural filtration $F$ on $\mathscr{D}_{\left(X,D\right)}$, and denote 
$$\widetilde{\mathscr{D}}_{\left(X,D\right)}=R_F \mathscr{D}_{\left(X,D\right)}:=\oplus_p F_p\mathscr{D}_{\left(X,D\right)} .$$
We similarly use $MG_{(\text{coh})}\left(\widetilde{\mathscr{D}}_{\left(X,D\right)}\right)$, $D^*G\left(\widetilde{\mathscr{D}}_{\left(X,D\right)}\right)$ to denote the corresponding categories as above.

Let $\widetilde{\mathscr{O}}_X$ be the sheaf given by the graded ring $\mathscr{O}_X[z]=R_F \mathscr{O}_X$, where the filtration $F$ on $\mathscr{O}_X$ is the trivial one. We denote $\widetilde{\mathbb{C}}_X=\mathbb{C}_X[z]$ as a graded sub-ring of $\widetilde{\mathscr{O}}_X$.
Further, given a coherent $\mathscr{O}_X$-module $\mathcal{L}$, we denote $\widetilde{\mathcal{L}}=\mathcal{L}[z]$, the induced graded 
$\widetilde{\mathscr{O}}_X$-module. In particular, we have $\widetilde{\omega}_X$, although it is also a $\widetilde{\mathscr{D}}_X$-module. From now on, we will omit the word ``graded'' as long as it is clear from the context.

As in \cite[A.5.]{SS16} and \cite[3.1]{CM99}, we define the logarithmic Spencer complex on a $\widetilde{\mathscr{D}}_{\left(X,D\right)}$-module $\mathcal{M}$, by
\begin{equation}\label{E: log Sp}
\textsl{Sp}_{\left(X,D\right)}\left(\mathcal{M}\right):=\{0\to \mathcal{M}\otimes\wedge^{n}\widetilde{\mathcal{T}}_{\left(X,D\right)}\to ...\to \mathcal{M}\otimes\widetilde{\mathcal{T}}_{\left(X,D\right)}\to \mathcal{M}\to 0\},
\end{equation}
with the $\widetilde{\mathbb{C}}_X$-linear differential map locally given by 
\begin{align*}
m\otimes \xi_1\wedge...\wedge\xi_k\mapsto \sum_{i=1}^k & \left(-1\right)^{i-1}m\xi_i\otimes \xi_1\wedge...\wedge\hat{\xi}_i\wedge...\wedge\xi_k +\\
&\sum_{1\leq i<j\leq k} (-1)^{i+j} m\otimes [\xi_i,\xi_j]\wedge \xi_1\wedge...\wedge\hat{\xi}_i\wedge...\wedge\hat{\xi}_j\wedge...\wedge\xi_k,
\end{align*}
where $\xi_1,...,\xi_{d_X}$ is a local basis of $\mathcal{T}_{\left(X,D\right)}\cdot z$. 
We have the following log version of \cite[Theorem 3.1.2.]{CM99}:
\begin{theorem}\label{T:resolution of O as D-module}
$\textsl{Sp}_{\left(X,D\right)}\left(\widetilde{\mathscr{D}}_{\left(X,D\right)}\right)$ is a resolution of $\widetilde{\mathscr{O}}_X$ as left $\widetilde{\mathscr{D}}_{\left(X,D\right)}$-modules.
\end{theorem}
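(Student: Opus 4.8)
The plan is to reduce this to an explicit Koszul-complex computation. Since the statement is local on $X$, I would work on $X=\Delta^{d_X}$ with coordinates $x_1,\dots,x_{d_X}$ and $D=\{x_1\cdots x_n=0\}$, using the global frame $\delta_i:=x_i\partial_i$ for $i\le n$ and $\delta_i:=\partial_i$ for $i>n$ of $\mathcal{T}_{(X,D)}$, and writing $\widetilde{\delta}_i:=\delta_i z$ for the induced frame of $\widetilde{\mathcal{T}}_{(X,D)}\cdot z$. The key first observation is that these log vector fields commute pairwise, $[\delta_i,\delta_j]=0$ for all $i,j$ --- for $i\ne j$ the two operators only involve disjoint variables. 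Hence the bracket term in the Spencer differential vanishes identically when $\mathcal{M}=\widetilde{\mathscr{D}}_{(X,D)}$, so $\textsl{Sp}_{(X,D)}(\widetilde{\mathscr{D}}_{(X,D)})$ is exactly the Koszul complex on $\widetilde{\mathscr{D}}_{(X,D)}$ attached to the $d_X$ pairwise-commuting endomorphisms ``right multiplication by $\widetilde{\delta}_1,\dots,\widetilde{\delta}_{d_X}$'', the left $\widetilde{\mathscr{D}}_{(X,D)}$-module structure relevant to the theorem being left multiplication on the first tensor factor, which commutes with all of these endomorphisms. I would then augment by $\varepsilon\colon\widetilde{\mathscr{D}}_{(X,D)}\to\widetilde{\mathscr{O}}_X$, $P\mapsto P\cdot 1$; this is $\widetilde{\mathscr{D}}_{(X,D)}$-linear and $\varepsilon(P\widetilde{\delta}_i)=P\cdot(\widetilde{\delta}_i\cdot 1)=0$, so $\varepsilon$ factors through $H_0$, and the goal reduces to: the augmented complex is exact.

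To prove exactness I would run a filtration argument against the commutative Koszul complex. Equip $\widetilde{\mathscr{D}}_{(X,D)}$ with its order filtration $F$; by the logarithmic PBW theorem $\mathrm{gr}^F\widetilde{\mathscr{D}}_{(X,D)}\cong\mathrm{Sym}_{\widetilde{\mathscr{O}}_X}\widetilde{\mathcal{T}}_{(X,D)}$, which here is the polynomial ring $R:=\widetilde{\mathscr{O}}_X[\sigma_1,\dots,\sigma_{d_X}]$ on the principal symbols $\sigma_i$ of $\widetilde{\delta}_i$. I would filter the degree-$k$ term of the augmented Koszul complex by $F_{\bullet-k}\widetilde{\mathscr{D}}_{(X,D)}\otimes\wedge^k\widetilde{\mathcal{T}}_{(X,D)}$ (and $\widetilde{\mathscr{O}}_X$ by its trivial filtration), the shift by $k$ being chosen precisely so that the differential (which raises the order by one) and $\varepsilon$ are filtered; the resulting filtration on the total complex is exhaustive and, in each homological degree, bounded below. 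Passing to $\mathrm{gr}^F$ turns the augmented Koszul complex into the \emph{commutative} Koszul complex of $(\sigma_1,\dots,\sigma_{d_X})$ over $R$, augmented onto $R/(\sigma_1,\dots,\sigma_{d_X})\cong\widetilde{\mathscr{O}}_X$. Since $\sigma_1,\dots,\sigma_{d_X}$ are the polynomial variables of $R$ over $\widetilde{\mathscr{O}}_X$ they form a regular sequence, so that Koszul complex is acyclic; by convergence of the filtration spectral sequence (exhaustive, bounded below) the augmented Koszul complex itself is then acyclic, which is the claim. The Rees grading in $z$ is respected by every map and plays no role --- one could equally argue one $z$-degree at a time.

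The part I expect to be the main obstacle is the bookkeeping that bridges the non-commutativity: the $\widetilde{\delta}_i$ commute with one another but not with $\widetilde{\mathscr{O}}_X$, so $\widetilde{\mathscr{D}}_{(X,D)}$ is genuinely not a polynomial ring and the commutative Koszul resolution cannot be invoked directly. The order filtration and PBW are exactly what repair this, and the delicate point is to verify that the shifted filtration above simultaneously makes the Koszul differential and $\varepsilon$ filtered \emph{and} has associated graded equal, on the nose, to the naive commutative Koszul complex of the regular sequence $(\sigma_i)$; after that the standard lemma ``a filtered map which is a quasi-isomorphism on $\mathrm{gr}$ is a quasi-isomorphism'' closes the argument. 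As an alternative route that sidesteps filtrations, one may use that $\textsl{Sp}$ is multiplicative under products of log-smooth pairs to reduce to the two one-dimensional cases $(\Delta,0)$ and $(\Delta,\{0\})$, where the augmented complex is simply $0\to\widetilde{\mathscr{D}}\xrightarrow{\,\cdot\widetilde{\delta}\,}\widetilde{\mathscr{D}}\to\widetilde{\mathscr{O}}\to 0$ and exactness is immediate from the PBW decomposition of $\widetilde{\mathscr{D}}$ over $\widetilde{\mathscr{O}}$.
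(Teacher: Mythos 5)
Your proposal is correct and follows essentially the same route as the paper: both reduce the claim to the associated graded complex with respect to the order filtration $F$, identify it with the Koszul complex of $\xi_1,\dots,\xi_{d_X}$ (a regular sequence generating $\mathrm{Gr}^F\widetilde{\mathscr{D}}_{(X,D)}$ over $\mathrm{Gr}^F\widetilde{\mathscr{O}}_X$), and conclude by standard filtered/spectral-sequence convergence. Your additional observations (pairwise commutativity of the log frame killing the bracket term, the degree shift making the differential filtered) are correct details that the paper leaves implicit.
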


Hence, we can view the logarithmic Spencer complex in the following way. We have the right exact functor:
$$\otimes_{\widetilde{\mathscr{D}}_{\left(X,D\right)}} \widetilde{\mathscr{O}}_X: MG\left(\widetilde{\mathscr{D}}_{\left(X,D\right)}\right)\to MG\left(\widetilde{\mathbb{C}}_X\right),$$
by
$$\mathcal{M}\mapsto \mathcal{M}\otimes_{\widetilde{\mathscr{D}}_{\left(X,D\right)}} \widetilde{\mathscr{O}}_X.$$
We can define its left derived functor
$$\textsl{Sp}_{\left(X,D\right)}=\otimes^{\mathbf{L}}_{\widetilde{\mathscr{D}}_{\left(X,D\right)}} \widetilde{\mathscr{O}}_X: D^*G\left(\widetilde{\mathscr{D}}_{\left(X,D\right)}\right)\to D^*G\left(\widetilde{\mathbb{C}}_X\right),$$
by
$$\textsl{Sp}_{\left(X,D\right)}\mathcal{M}^\bullet= \mathcal{M}^\bullet\otimes^{\mathbf{L}}_{\widetilde{\mathscr{D}}_{\left(X,D\right)}} \widetilde{\mathscr{O}}_X.$$
Note that each term of $\textsl{Sp}_{\left(X,D\right)}\left(\widetilde{\mathscr{D}}_{\left(X,D\right)}\right)$ is locally free over $\widetilde{\mathscr{D}}_{\left(X,D\right)}$. Hence due to Theorem \ref{T:resolution of O as D-module}, we have $\textsl{Sp}_{\left(X,D\right)}\left(\mathcal{M}\right)\simeq \mathcal{M}\otimes^{\mathbf{L}}_{\widetilde{\mathscr{D}}_{\left(X,D\right)}} \widetilde{\mathscr{O}}_X$

To relieve the burden of notation, in the rest of the paper, $\otimes$ means $\otimes_{\widetilde{\mathscr{O}}}$ or $\otimes_{\mathscr{O}}$, depending on the context, if not otherwise specified.

As in the $\widetilde{\mathscr{D}}_X$ case \cite[2.4.1. 2.4.2.]{Sa88} \cite[Exercise A.19]{SS16}, there are two right $\widetilde{\mathscr{D}}_{\left(X,D\right)}$-module structures on $\mathcal{M}\otimes \widetilde{\mathscr{D}}_{\left(X,D\right)}$, which are defined by:
\begin{align*}
\text{(right)}_\text{triv}:&\ 
\begin{cases}
\left(m\otimes P\right)\cdot_\text{triv} f=m\otimes \left(Pf\right)\\
\left(m\otimes P\right)\cdot_\text{triv} \xi=m\otimes \left(P\xi\right)
\end{cases}
\\
\text{(right)}_\text{tens}:&\ 
\begin{cases}
\left(m\otimes P\right)\cdot_\text{tens} f=mf\otimes P\\
\left(m\otimes P\right)\cdot_\text{tens} \xi=m\xi\otimes P-m\otimes \left(\xi P\right),
\end{cases}
\end{align*}
where $f\in \mathcal{O}_X$, and $\xi\in F_1\mathscr{D}_{\left(X,D\right)}\cdot z$. We call the first one the trivial right $\widetilde{\mathscr{D}}_{\left(X,D\right)}$-module structure, and the second one the right $\widetilde{\mathscr{D}}_{\left(X,D\right)}$-module structure induced by tensor product.
\begin{prop}\label{P:involution}
Notations as above, there is a unique involution $\tau: \mathcal{M}\otimes \widetilde{\mathscr{D}}_{\left(X,D\right)}\to \mathcal{M}\otimes \widetilde{\mathscr{D}}_{\left(X,D\right)}$ which exchanges both structures and is the identity on $\mathcal{M}\otimes 1$. It is given by 
$$\left(m\otimes P\right)\mapsto (m\otimes 1)\cdot_\text{tens}P.$$
\end{prop}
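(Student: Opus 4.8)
The plan is to verify directly that the given formula defines a morphism of sheaves, that it intertwines the two $\widetilde{\mathscr{D}}_{(X,D)}$-actions in the stated way, that it restricts to the identity on $\mathcal{M}\otimes 1$, and that it is an involution; uniqueness will then follow because the two structures together with the normalization on $\mathcal{M}\otimes 1$ force the value of $\tau$ on all of $\mathcal{M}\otimes\widetilde{\mathscr{D}}_{(X,D)}$. First I would record that $\mathcal{M}\otimes\widetilde{\mathscr{D}}_{(X,D)}$ is generated as a right $\widetilde{\mathscr{D}}_{(X,D)}$-module for the trivial structure by $\mathcal{M}\otimes 1$ (indeed $(m\otimes 1)\cdot_{\mathrm{triv}}P = m\otimes P$), so a map that is $\widetilde{\mathbb{C}}_X$-linear, sends $\cdot_{\mathrm{triv}}$ to $\cdot_{\mathrm{tens}}$, and is the identity on $\mathcal{M}\otimes 1$ is forced to satisfy $\tau(m\otimes P) = \tau\bigl((m\otimes 1)\cdot_{\mathrm{triv}}P\bigr) = (m\otimes 1)\cdot_{\mathrm{tens}}P$; expanding the latter using the generators of $\widetilde{\mathscr{D}}_{(X,D)}$ gives exactly $mP\otimes 1 - m\otimes P$, which both proves uniqueness and motivates the formula. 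Here one uses that $F_1\widetilde{\mathscr{D}}_{(X,D)}$ is generated over $\widetilde{\mathscr{O}}_X$ by the log vector fields $\xi_i$ together with $\widetilde{\mathscr{O}}_X$ itself, and that both $\cdot_{\mathrm{tens}}$ structures are defined on these generators; since the formula is manifestly well-defined and additive, it suffices to check the module-structure identities on $f\in\widetilde{\mathscr{O}}_X$ and on $\xi\in\mathcal{T}_{(X,D)}\cdot z$.

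Next I would check the intertwining property $\tau\bigl((m\otimes P)\cdot_{\mathrm{triv}}Q\bigr) = \tau(m\otimes P)\cdot_{\mathrm{tens}}Q$ for $Q = f$ and $Q = \xi$ a generator, which by additivity and the generation statement suffices. For $Q=f$: the left side is $\tau(m\otimes Pf) = mPf\otimes 1 - m\otimes Pf$, while the right side is $(mP\otimes 1 - m\otimes P)\cdot_{\mathrm{tens}}f = mPf\otimes 1 - mf\otimes P$; these need not look equal termwise, so one rewrites $m\otimes Pf$ inside the first expression. Actually the cleanest route is to check instead that $\tau$ sends $\cdot_{\mathrm{tens}}$ to $\cdot_{\mathrm{triv}}$, or — better — to prove the involution property first and deduce the second intertwining from the first by conjugation; I will organize the computation so that I verify one direction of intertwining and the identity on $\mathcal{M}\otimes 1$, then show $\tau^2 = \mathrm{id}$, after which the other intertwining is automatic since $\tau$ conjugates one structure into the other and $\tau = \tau^{-1}$.

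For the involution property, apply $\tau$ twice: $\tau\bigl(\tau(m\otimes P)\bigr) = \tau(mP\otimes 1) - \tau(m\otimes P)$. The term $\tau(mP\otimes 1) = mP\otimes 1$ by the normalization, and $\tau(m\otimes P) = mP\otimes 1 - m\otimes P$, so $\tau^2(m\otimes P) = mP\otimes 1 - (mP\otimes 1 - m\otimes P) = m\otimes P$, as desired. The only subtlety here is linearity of $\tau$ over the subsheaf acting trivially — i.e.\ that $\tau(mP\otimes 1)$ really equals $mP\otimes 1$ — which holds because $mP\otimes 1 = (m'\otimes 1)$ with $m' = mP\in\mathcal{M}$ once $P$ is applied, and $\tau$ is the identity on $\mathcal{M}\otimes 1$; this requires knowing the formula is well-defined independently of how one writes an element of $\mathcal{M}\otimes\widetilde{\mathscr{D}}_{(X,D)}$, which follows since $\mathcal{M}\otimes\widetilde{\mathscr{D}}_{(X,D)}$ is a free left $\widetilde{\mathscr{O}}_X$-module with basis a PBW-type basis of $\widetilde{\mathscr{D}}_{(X,D)}$, and the map $P\mapsto$ (the formula) is $\widetilde{\mathscr{O}}_X$-linear in the appropriate sense.

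The main obstacle, though entirely routine, is bookkeeping: one must carefully track the sign conventions in $\cdot_{\mathrm{tens}}$ for $\xi$, namely $(m\otimes P)\cdot_{\mathrm{tens}}\xi = m\xi\otimes P - m\otimes(\xi P)$, and confirm that expanding $(m\otimes 1)\cdot_{\mathrm{tens}}P$ by iterating this rule along a word in the $\xi_i$'s and elements of $\widetilde{\mathscr{O}}_X$ telescopes to the closed form $mP\otimes 1 - m\otimes P$. This is the same verification as in the classical $\widetilde{\mathscr{D}}_X$ case cited in \cite[2.4.1, 2.4.2]{Sa88} and \cite[Exercise A.3.9]{SS16}; the only new point is that here $P$ ranges over the \emph{logarithmic} operators, but since $\mathcal{T}_{(X,D)}\subset\mathcal{T}_X$ and the commutation relations $[\xi_i,\xi_j]$ again lie in $\mathcal{T}_{(X,D)}\cdot z$, every step stays within $\mathcal{M}\otimes\widetilde{\mathscr{D}}_{(X,D)}$ and the identical computation goes through verbatim.
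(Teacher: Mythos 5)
Your overall skeleton --- uniqueness because $\mathcal{M}\otimes 1$ generates $\mathcal{M}\otimes\widetilde{\mathscr{D}}_{\left(X,D\right)}$ under $\cdot_\text{triv}$, so that $\tau(m\otimes P)$ is forced to equal $(m\otimes 1)\cdot_\text{tens}P$ --- is the right one (the paper itself omits the proof and defers to \cite[2.4.2]{Sa88}, so there is no argument of the paper to compare against). But there is a genuine error at the step you lean on most: the claim that iterating the rule $(m\otimes P)\cdot_\text{tens}\xi=m\xi\otimes P-m\otimes\left(\xi P\right)$ along a word in the $\xi_i$ ``telescopes to the closed form $mP\otimes 1-m\otimes P$.'' It does not. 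Already for $P=\xi_1\xi_2$ one gets
\[
\left(m\otimes 1\right)\cdot_\text{tens}\xi_1\xi_2=m\xi_1\xi_2\otimes 1-m\xi_1\otimes\xi_2-m\xi_2\otimes\xi_1+m\otimes\xi_2\xi_1,
\]
which is not $m\xi_1\xi_2\otimes 1-m\otimes\xi_1\xi_2$; and for $P=f\in\widetilde{\mathscr{O}}_X$ the closed formula gives $mf\otimes 1-m\otimes f=0$ while $(m\otimes 1)\cdot_\text{tens}f=mf\otimes 1$. In particular the closed formula, read literally for all $P$, yields $\tau(m\otimes 1)=0$ and is therefore incompatible with the normalization ``identity on $\mathcal{M}\otimes 1$.'' The displayed formula in the Proposition can only be the value of $\tau$ on $m\otimes\xi$ for $\xi\in\widetilde{\mathcal{T}}_{\left(X,D\right)}$; on higher-order operators $\tau$ must be taken to be $(m\otimes 1)\cdot_\text{tens}P$, not the closed expression.

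This false step propagates. Your verification of $\tau^2=\mathrm{id}$ evaluates $\tau(mP\otimes 1)$ by the normalization but $\tau(m\otimes P)$ by the closed formula; since those two prescriptions contradict each other on $\mathcal{M}\otimes 1$, the computation establishes nothing. With the correct definition $\tau(m\otimes P)=(m\otimes 1)\cdot_\text{tens}P$, well-definedness over $\widetilde{\mathscr{O}}_X$ and the intertwining $\tau\left(x\cdot_\text{triv}Q\right)=\tau(x)\cdot_\text{tens}Q$ are immediate from associativity of $\cdot_\text{tens}$ (your troublesome case $Q=f$ disappears), and the real content becomes $\tau^2=\mathrm{id}$. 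That requires showing that $\tau$ also sends $\cdot_\text{tens}$ to $\cdot_\text{triv}$ --- for instance by induction on the order of $P$, using that the two right structures commute with each other --- and then invoking uniqueness of a map intertwining $\cdot_\text{triv}$ with itself and fixing $\mathcal{M}\otimes 1$. None of this is difficult, but it is precisely the part of the ``direct check'' that your proposal does not actually carry out.
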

Since it can be checked directly, we omit the proof here. See \cite[2.4.2.]{Sa88} for the details.

Recall that we have a natural resolution for any $\widetilde{\mathscr{D}}_{\left(X,D\right)}$-module.

\begin{align}\label{E:M otimes spencer}
\mathcal{M}\simeq &\mathcal{M}\otimes \textsl{Sp}_{\left(X,D\right)}\left(\widetilde{\mathscr{D}}_{\left(X,D\right)}\right)\\
\simeq &\{0\to \left(\mathcal{M}\otimes\wedge^{n}\widetilde{\mathcal{T}}_{\left(X,D\right)}\otimes \widetilde{\mathscr{D}}_{\left(X,D\right)}\right)_\text{triv}\to ...\nonumber\\
&\to \left(\mathcal{M}\otimes\widetilde{\mathcal{T}}_{\left(X,D\right)}\otimes \widetilde{\mathscr{D}}_{\left(X,D\right)}\right)_\text{triv}\to \left(\mathcal{M}\otimes \widetilde{\mathscr{D}}_{\left(X,D\right)}\right)_\text{triv}\to 0\}.\nonumber
\end{align}
It has been built in \cite[2.1.6. Lemme, 2.2.8. Lemme.]{Sa88} in the filtered $\mathscr{D}$-module case.

To define the pushforward functor, we denote 
$$\widetilde{\mathscr{D}}_{\left(X,D^X\right)\to \left(Y,D^Y\right)}=\widetilde{\mathscr{O}}_X\otimes_{f^{-1}\widetilde{\mathscr{O}}_Y}f^{-1}\widetilde{\mathscr{D}}_{\left(Y,D^Y\right)},$$ 
which is a left $\widetilde{\mathscr{D}}_{\left(X,D^X\right)}$, right $f^{-1}\widetilde{\mathscr{D}}_{\left(Y,D^Y\right)}$-module.

Following a similar route in \cite[A.8.a]{SS16}, we define the pushforward functor, 
$$f_\#: D^+G\left(\widetilde{\mathscr{D}}_{\left(X,D^X\right)}\right)\to D^+G\left(\widetilde{\mathscr{D}}_{\left(Y,D^Y\right)}\right),$$
by
\begin{equation}\label{E:direct image}
f_\# \mathcal{M}^\bullet=\mathbf{R}f_*\left( \mathcal{M}^\bullet \otimes^{\mathbf{L}}_ {\widetilde{\mathscr{D}}_{\left(X,D^X\right)}}\widetilde{\mathscr{D}}_{\left(X,D^X\right)\to \left(Y,D^Y\right)} \right).
\end{equation}

If $\mathcal{M}$ is an object in $MG_{(\text{coh})}\left(\widetilde{\mathscr{D}}_{\left(X,D\right)}\right)$, then each cohomology of $f_\# \mathcal{M}$ is an object in $MG_{(\text{coh})}\left(\widetilde{\mathscr{D}}_{\left(X,D\right)}\right)$, \cite[Theorem A.10.15]{SS16}.

Note that if both $D^X$ and $D^Y$ are trivial, we have $f_\#=f_+$, where $f_+$ is the pushforward functor on the derived category of $\widetilde{\mathscr{D}}$-modules. (In \cite{SS16}, they use the notation $_Df_*$ instead of $f_+$.)

Further, in the situation of Theorem \ref{T:Comparison Theorem}, considering the identity map $id:(X,D_I)\to (X,D_S)$, we have
\begin{align*}
id_\#\mathcal{M}[*D][* \log D_I]&=\mathcal{M}[*D][* \log D_I]\otimes^{\mathbf{L}}_{\widetilde{\mathscr{D}}_{\left(X,D_I\right)}} \widetilde{\mathscr{D}}_{\left(X,D_S\right)}
\\
id_\#\mathcal{M}[!D][! \log D_I]&=\mathcal{M}[!D][! \log D_I]\otimes^{\mathbf{L}}_{\widetilde{\mathscr{D}}_{\left(X,D_I\right)}} \widetilde{\mathscr{D}}_{\left(X,D_S\right)}.
\end{align*}

\begin{prop}
Let $f:\left(X, D^X\right)\to \left(Y, D^Y\right)$ and $g:\left(Y,D^Y\right)\to \left(Z,D^Z\right)$ be two morphisms of log-smooth pairs, and  assume that $f$ is proper, we have a functorial canonical isomorphism of functors
$$\left(g\circ f\right)_\#\simeq g_\#\circ f_\#.$$
\end{prop}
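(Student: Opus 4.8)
The plan is to reduce the composition formula for $f_\#$ to the classical composition formula for the direct image of $\mathscr{D}$-modules (the $D=0$ case), which is known, by establishing the usual projection/compatibility isomorphisms at the level of the transfer bimodules and the relative Spencer functor. Concretely, spelling out the definition in (\ref{E:direct image}), one has
\begin{align*}
g_\#f_\#\mathcal{M}^\bullet &= \mathbf{R}g_*\left(\left(\mathbf{R}f_*\left(\mathcal{M}^\bullet\otimes^{\mathbf{L}}_{\widetilde{\mathscr{D}}_{(X,D^X)}}\widetilde{\mathscr{D}}_{(X,D^X)\to(Y,D^Y)}\right)\right)\otimes^{\mathbf{L}}_{\widetilde{\mathscr{D}}_{(Y,D^Y)}}\widetilde{\mathscr{D}}_{(Y,D^Y)\to(Z,D^Z)}\right),
\end{align*}
while $(g\circ f)_\#\mathcal{M}^\bullet=\mathbf{R}(g\circ f)_*\left(\mathcal{M}^\bullet\otimes^{\mathbf{L}}_{\widetilde{\mathscr{D}}_{(X,D^X)}}\widetilde{\mathscr{D}}_{(X,D^X)\to(Z,D^Z)}\right)$. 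So the three ingredients I would assemble are: (i) the transfer-bimodule transitivity
$$\widetilde{\mathscr{D}}_{(X,D^X)\to(Y,D^Y)}\otimes^{\mathbf{L}}_{f^{-1}\widetilde{\mathscr{D}}_{(Y,D^Y)}}f^{-1}\widetilde{\mathscr{D}}_{(Y,D^Y)\to(Z,D^Z)}\simeq \widetilde{\mathscr{D}}_{(X,D^X)\to(Z,D^Z)}$$
as $(\widetilde{\mathscr{D}}_{(X,D^X)},(g\circ f)^{-1}\widetilde{\mathscr{D}}_{(Z,D^Z)})$-bimodules; (ii) the projection formula allowing the factor $\widetilde{\mathscr{D}}_{(Y,D^Y)\to(Z,D^Z)}$ (pulled back via $f^{-1}$) to move across $\mathbf{R}f_*$; and (iii) the identification $\mathbf{R}g_*\circ\mathbf{R}f_*\simeq\mathbf{R}(g\circ f)_*$, which is the Leray/composition of derived functors on sheaves of graded modules.

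For (i) I would argue directly: $\widetilde{\mathscr{D}}_{(X,D^X)\to(Y,D^Y)}=\widetilde{\mathscr{O}}_X\otimes_{f^{-1}\widetilde{\mathscr{O}}_Y}f^{-1}\widetilde{\mathscr{D}}_{(Y,D^Y)}$ is locally free over $f^{-1}\widetilde{\mathscr{D}}_{(Y,D^Y)}$ on the right (indeed it is $\widetilde{\mathscr{O}}_X\otimes_{f^{-1}\widetilde{\mathscr{O}}_Y}f^{-1}\widetilde{\mathscr{D}}_{(Y,D^Y)}$ with $\widetilde{\mathscr{O}}_X$ flat over $f^{-1}\widetilde{\mathscr{O}}_Y$ in the log-smooth case, or at least one can resolve by the relative log-Spencer complex, whose terms are locally free), so the derived tensor product is underived and one is left with the associativity isomorphism
$$\widetilde{\mathscr{O}}_X\otimes_{f^{-1}\widetilde{\mathscr{O}}_Y}f^{-1}\left(\widetilde{\mathscr{O}}_Y\otimes_{g^{-1}\widetilde{\mathscr{O}}_Z}g^{-1}\widetilde{\mathscr{D}}_{(Z,D^Z)}\right)\simeq \widetilde{\mathscr{O}}_X\otimes_{(g\circ f)^{-1}\widetilde{\mathscr{O}}_Z}(g\circ f)^{-1}\widetilde{\mathscr{D}}_{(Z,D^Z)},$$
using $f^{-1}g^{-1}=(g\circ f)^{-1}$ and $f^{-1}\widetilde{\mathscr{O}}_Y\otimes\!-\cong\!-$; one then checks this is compatible with both the left $\widetilde{\mathscr{D}}_{(X,D^X)}$-action and the right $(g\circ f)^{-1}\widetilde{\mathscr{D}}_{(Z,D^Z)}$-action, which is a routine local computation in the chosen coordinates (using that the log-tangent fields $x_i\partial_i,\partial_j$ and their images behave functorially under morphisms of log-smooth pairs). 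For (ii), the projection formula $\mathbf{R}f_*(\mathcal{F}^\bullet)\otimes^{\mathbf{L}}\mathcal{G}^\bullet\simeq\mathbf{R}f_*(\mathcal{F}^\bullet\otimes^{\mathbf{L}}f^{-1}\mathcal{G}^\bullet)$ applies here because $f$ is proper (this is exactly where properness of $f$ enters) and because we may take the left-derived tensor over $\widetilde{\mathscr{D}}_{(Y,D^Y)}$ by resolving $\widetilde{\mathscr{D}}_{(Y,D^Y)\to(Z,D^Z)}$ — or rather its relative Spencer resolution with $f^{-1}$-locally free terms — so that the tensor is computed termwise and commutes with $\mathbf{R}f_*$ by the ordinary $\mathscr{O}$-module projection formula applied graded-piece by graded-piece.

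Stringing these together: start from $g_\#f_\#\mathcal{M}^\bullet$, use (ii) to pull $\widetilde{\mathscr{D}}_{(Y,D^Y)\to(Z,D^Z)}$ inside $\mathbf{R}f_*$, use associativity of $\otimes^{\mathbf{L}}$ over $\widetilde{\mathscr{D}}_{(X,D^X)}$ then over $f^{-1}\widetilde{\mathscr{D}}_{(Y,D^Y)}$ to regroup, apply (i) to collapse the two transfer bimodules into $\widetilde{\mathscr{D}}_{(X,D^X)\to(Z,D^Z)}$, and finally apply (iii) to replace $\mathbf{R}g_*\mathbf{R}f_*$ by $\mathbf{R}(g\circ f)_*$, landing on $(g\circ f)_\#\mathcal{M}^\bullet$. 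All isomorphisms are canonical, and functoriality in $\mathcal{M}^\bullet$ is clear since each step is natural; this gives the isomorphism of functors. I expect the main obstacle to be bookkeeping rather than conceptual: verifying that the bimodule associativity isomorphism in (i) is genuinely $\widetilde{\mathscr{D}}$-bilinear for the log structures — i.e. that the ad hoc left/right module structures on the transfer bimodules (analogous to the $\cdot_{\text{triv}}$ and $\cdot_{\text{tens}}$ structures recalled above) match up under the associativity map — and ensuring the Spencer resolutions used to compute the derived tensor products are simultaneously adapted to all the functors involved so that ``derived'' may be dropped at the right moments; the properness of $f$ is used exactly once, in the projection formula of step (ii), exactly as in the classical $D=0$ case treated in \cite{Sa88} and \cite{SS16}.
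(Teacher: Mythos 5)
Your proposal follows essentially the same route as the paper: the paper proves the transfer-bimodule transitivity (i) by exactly the associativity/cancellation computation of derived tensor products that you describe, and then concludes via the projection formula and the definition of $f_\#$, citing \cite{SS16} for the details of your steps (ii) and (iii). The one caveat is your parenthetical claim that $\widetilde{\mathscr{O}}_X$ is flat over $f^{-1}\widetilde{\mathscr{O}}_Y$ (false for, e.g., closed embeddings); it is inessential, since the correct justification — the relevant tensor factors such as $f^{-1}\widetilde{\mathscr{D}}_{\left(Y,D^Y\right)}$ and $f^{-1}\widetilde{\mathscr{D}}_{\left(Y,D^Y\right)\to\left(Z,D^Z\right)}$ are locally free over $f^{-1}\widetilde{\mathscr{O}}_Y$ — is what the paper actually uses to identify the underived and derived tensor products.
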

It is just a log version of \cite[Theorem A.8.16. Remark A.8.17.]{SS16}, so we omit the proof here.

Recall that we define the duality functor, \cite[2.4.3]{Sa88},
$$\mathbf{D}_{X}:D^-G\left(\widetilde{\mathscr{D}}_{X}\right)\to D^+G\left(\widetilde{\mathscr{D}}_{X}\right),$$
by
$$
\mathbf{D}_{X}\mathcal{M}^\bullet=\mathbf{R}\mathcal{H}om_{\widetilde{\mathscr{D}}_{X}}\left(\mathcal{M}^\bullet, \left(\widetilde{\omega}_X[d_X]\otimes \widetilde{\mathscr{D}}_{X}\right)_{\text{tens}}\right),
$$
where $\widetilde{\omega}_X[d_X]$ means shifting the sheaf $\widetilde{\omega}_X$ to the cohomology degree $-d_X$.

Similarly, we define the duality functor
$$\mathbf{D}_{\left(X,D^X\right)}:D^-G\left(\widetilde{\mathscr{D}}_{\left(X,D^X\right)}\right)\to D^+G\left(\widetilde{\mathscr{D}}_{\left(X,D^X\right)}\right),$$
by
\begin{equation}\label{E:dual definition}
\mathbf{D}_{\left(X,D^X\right)}\mathcal{M}^\bullet=\mathbf{R}\mathcal{H}om_{\widetilde{\mathscr{D}}_{\left(X,D^X\right)}}\left(\mathcal{M}^\bullet, \left(\widetilde{\omega}_X[d_X]\otimes \widetilde{\mathscr{D}}_{\left(X,D^X\right)}\right)_{\text{tens}}\right),
\end{equation}

We use the induced tensor right $\widetilde{\mathscr{D}}_{\left(X,D^X\right)}$-module structure on $\widetilde{\omega}_X\otimes \widetilde{\mathscr{D}}_{\left(X,D^X\right)}$ when we take the $\mathbf{R}\mathcal{H}om_{\widetilde{\mathscr{D}}_{\left(X,D^X\right)}}\left(\mathcal{M}^\bullet, -\right)$ functor, and we use the trivial right $\widetilde{\mathscr{D}}_{\left(X,D^X\right)}$-module structure on $\widetilde{\omega}_X\otimes \widetilde{\mathscr{D}}_{\left(X,D^X\right)}$ to give the right $\widetilde{\mathscr{D}}_{\left(X,D^X\right)}$-module structure on $\mathbf{D}_{\left(X,D^X\right)}\mathcal{M}^\bullet$.

\section{Strictness Condition} 
In this section, we recall the definition of the strictness condition. We also recall the associated graded functor in this section. At the end of the section, we explicitly write down the formulae for the induced direct image functor and duality functor on the associated graded module, under certain strictness condition. They can be very useful in some geometric applications.   

Fix a filtered ring $\left(\mathcal{R},F\right)$. We always assume the filtration $F$ is exhaustive, which means $\cup_i F_i\mathcal{R}=\mathcal{R}$, and $F_{\ll 0}=0$. Denote $\widetilde{\mathcal{R}}:=R_F\mathcal{R}$, the associated Rees algebra. We define the strictness condition on $MG(\widetilde{\mathcal{R}})$ by \cite[Definition A.2.5.]{SS16}:
\begin{definition}\ \\
(1) An object of $MG(\widetilde{\mathcal{R}})$ is said to be strict, if it has no $\mathbb{C}[z]$-torsion, i.e., comes
from a filtered $\mathcal{R}$-module.\\
(2) A morphism in $MG(\widetilde{\mathcal{R}})$ is said to be strict, if its kernel and cokernel are strict. (Note that the composition of two strict morphisms need not be strict).\\
(3) A complex $\mathcal{M}^{\bullet}$ of $MG(\widetilde{\mathcal{R}})$ is said to be strict, if each of its cohomology modules is a strict object of $MG(\widetilde{\mathcal{R}})$. (Hence we can use the same definition for the strictness of $\mathcal{M}^{\bullet}\in D^*G(\widetilde{\mathcal{R}})$.
\end{definition}
We have a right exact functor
$$\text{Gr}^F: MG(\widetilde{\mathcal{R}})\to MG\left(\text{Gr}^F\mathcal{R}\right),$$
defined by $\text{Gr}^F\left(\mathcal{M}\right)=\mathcal{M}\otimes_{\widetilde{\mathbb{C}}}(\widetilde{\mathbb{C}}/z\widetilde{\mathbb{C}}).$
It naturally induces a derived functor:
$$\widetilde{\text{Gr}}^F: D^*G(\widetilde{\mathcal{R}})\to D^*G\left(\text{Gr}^F\mathcal{R}\right)$$
given by 
$$\widetilde{\text{Gr}}^F \mathcal{M}^\bullet=\mathcal{M}^\bullet\otimes^{\mathbf{L}}_{\widetilde{\mathbb{C}}}\widetilde{\mathbb{C}}/z\widetilde{\mathbb{C}}.$$

In particular, if we assume that $\mathcal{M}\in MG(\widetilde{\mathcal{R}})$ is strict, which means $\mathcal{M}$ is the associated Rees module of a filtered $\mathcal{R}$-module, we have
$$\text{Gr}^F\mathcal{M}\simeq \widetilde{\text{Gr}}^F \mathcal{M}.$$

Now given $\mathcal{M}^{\bullet}\in DG(\widetilde{\mathcal{R}})$, we have a spectral sequence 
$$E_2^{p,q}=\mathcal{H}^{p}\widetilde{\text{Gr}}^F \mathcal{H}^q\mathcal{M}^\bullet\Rightarrow \mathcal{H}^{p+q} \widetilde{\text{Gr}}^F \mathcal{M}^\bullet.$$
If $\mathcal{M}^\bullet$ is strict, we have the spectral sequence degenerates at the $E_2$ page, which implies the commutativity of taking cohomology and $\widetilde{\text{Gr}}^F$ functor under the strictness condition:
$$\widetilde{\text{Gr}}^F \mathcal{H}^i \mathcal{M}^\bullet=\mathcal{H}^{i} \widetilde{\text{Gr}}^F \mathcal{M}^\bullet.$$

In the case that $\widetilde{\mathcal{R}}=\widetilde{\mathscr{D}}_{\left(X,D\right)}$, we have  
$$\mathcal{A}_{\left(X,D\right)}:=\widetilde{\text{Gr}}^F\widetilde{\mathscr{D}}_{\left(X,D\right)}= \textsl{Sym} \mathcal{T}_{\left(X,D\right)}.$$
We denote 
$$T^*_{\left(X,D\right)}=\text{Spec}\left(\mathcal{A}_{\left(X,D\right)}\right),$$
the space of log-cotangent bundle over $\left(X,D\right)$. Hence, we have a canonical functor
$$\widetilde{\text{Gr}}^F: D^*G\left(\widetilde{\mathscr{D}}_{\left(X,D\right)}\right)\to D^*G\left(\mathcal{A}_{\left(X,D\right)}\right),$$
We denote by $\mathcal{G}\left(\mathcal{M}^{\bullet }\right),$ the object in the derived category of $\mathscr{O}_{T^*_{\left(X,D\right)}}$-modules that is induced by $\widetilde{\text{Gr}}^F\mathcal{M}^\bullet$. 

Given a strict $\widetilde{\mathscr{D}}_{\left(X,D^X\right)}$-module $\mathcal{M}$, the following two propositions give explicit formulae for the direct image functor and duality functor induced on $\text{Gr}^F\mathcal{M}$. See \cite[Theorem 2.3, 2.4]{PS13} for similar statements for mixed Hodge modules. The statements here are more general and we give a simplified and functorial proof.
\begin{prop}[Laumon's formula]\label{P:laumon}
Let $f:\left(X,D^X\right)\to \left(Y,D^Y\right)$ be a morphism of log-smooth pairs.
Let $\mathcal{M}\in MG\left(\widetilde{\mathscr{D}}_{\left(X,D^X\right)}\right)$. Assume that both $\mathcal{M}$ and $f_\#\mathcal{M}^\bullet$ are strict. Then we have
$$\mathcal{H}^i f_{\widetilde{\#}}\text{Gr}^F\mathcal{M}:=R^i f_*\left(\text{Gr}^F\mathcal{M}\otimes^{\mathbf{L}}_{\mathcal{A}_{\left(X,D^X\right)}}f^*\mathcal{A}_{\left(Y,D^Y\right)}\right)\simeq\text{Gr}^F \mathcal{H}^i f_\# \mathcal{M}.$$
\end{prop}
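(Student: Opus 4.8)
The plan is to reduce the statement to the classical Laumon formula for $\widetilde{\mathscr D}$-modules (or rather its proof technique) by unwinding the definition of $f_\#$ and commuting $\widetilde{\text{Gr}}^F$ past each of the operations involved. Recall from \eqref{E:direct image} that $f_\#\mathcal M^\bullet = \mathbf R f_*\bigl(\mathcal M^\bullet\otimes^{\mathbf L}_{\widetilde{\mathscr D}_{(X,D^X)}}\widetilde{\mathscr D}_{(X,D^X)\to(Y,D^Y)}\bigr)$. So $f_\#\mathcal M$ is built from three ingredients: the transfer bimodule $\widetilde{\mathscr D}_{(X,D^X)\to(Y,D^Y)}=\widetilde{\mathscr O}_X\otimes_{f^{-1}\widetilde{\mathscr O}_Y}f^{-1}\widetilde{\mathscr D}_{(Y,D^Y)}$, a derived tensor product over $\widetilde{\mathscr D}_{(X,D^X)}$, and the derived pushforward $\mathbf R f_*$. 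First I would record that $\widetilde{\text{Gr}}^F$ (i.e. $-\otimes^{\mathbf L}_{\widetilde{\mathbb C}}\widetilde{\mathbb C}/z\widetilde{\mathbb C}$) is a base change along the central element $z$, hence commutes with any derived tensor product of $\widetilde{\mathbb C}[z]$-flat complexes and with $\mathbf R f_*$ (the latter because $\widetilde{\text{Gr}}^F$ is, up to the identification $\widetilde{\mathbb C}/z\widetilde{\mathbb C}=\mathbb C$, pulled back from the base and $\mathbf R f_*$ commutes with such base changes; concretely one uses that $-\otimes^{\mathbf L}_{\mathbb C[z]}\mathbb C$ is computed by the two-term Koszul complex $[\,\cdot z\,]$, which is preserved by $\mathbf R f_*$). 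Applying $\widetilde{\text{Gr}}^F$ to the transfer bimodule gives $\text{Gr}^F\widetilde{\mathscr D}_{(X,D^X)\to(Y,D^Y)} = \mathscr O_X\otimes_{f^{-1}\mathscr O_Y}f^{-1}\mathcal A_{(Y,D^Y)} = f^*\mathcal A_{(Y,D^Y)}$, the $\mathscr O_X$-module underlying the log-pullback of functions on $T^*_{(Y,D^Y)}$, with its natural $(\mathcal A_{(X,D^X)},f^{-1}\mathcal A_{(Y,D^Y)})$-bimodule structure.

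The second step is to combine these commutations. Because each term of the log-Spencer resolution $\textsl{Sp}_{(X,D^X)}\bigl(\widetilde{\mathscr D}_{(X,D^X)}\bigr)$ is locally free over $\widetilde{\mathscr D}_{(X,D^X)}$ (and over $\widetilde{\mathbb C}$), we may compute $\mathcal M\otimes^{\mathbf L}_{\widetilde{\mathscr D}_{(X,D^X)}}\widetilde{\mathscr D}_{(X,D^X)\to(Y,D^Y)}$ by a complex of $\widetilde{\mathbb C}[z]$-flat sheaves, so $\widetilde{\text{Gr}}^F$ passes through it and through $\mathbf R f_*$. Using strictness of $\mathcal M$ we have $\widetilde{\text{Gr}}^F\mathcal M = \text{Gr}^F\mathcal M$, and using strictness of $f_\#\mathcal M^\bullet$ we have $\widetilde{\text{Gr}}^F\mathcal H^i f_\#\mathcal M = \mathcal H^i\widetilde{\text{Gr}}^F f_\#\mathcal M$ (the degeneration of the spectral sequence $E_2^{p,q}=\mathcal H^p\widetilde{\text{Gr}}^F\mathcal H^q\Rightarrow\mathcal H^{p+q}\widetilde{\text{Gr}}^F$ recalled just before Proposition~\ref{P:laumon}). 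Chaining these isomorphisms yields
\begin{align*}
\widetilde{\text{Gr}}^F\mathcal H^i f_\#\mathcal M
&= \mathcal H^i\,\widetilde{\text{Gr}}^F\,\mathbf R f_*\bigl(\mathcal M\otimes^{\mathbf L}_{\widetilde{\mathscr D}_{(X,D^X)}}\widetilde{\mathscr D}_{(X,D^X)\to(Y,D^Y)}\bigr)\\
&= \mathcal H^i\,\mathbf R f_*\bigl(\widetilde{\text{Gr}}^F\mathcal M\otimes^{\mathbf L}_{\mathcal A_{(X,D^X)}}\text{Gr}^F\widetilde{\mathscr D}_{(X,D^X)\to(Y,D^Y)}\bigr)\\
&= R^i f_*\bigl(\text{Gr}^F\mathcal M\otimes^{\mathbf L}_{\mathcal A_{(X,D^X)}}f^*\mathcal A_{(Y,D^Y)}\bigr),
\end{align*}
which is exactly $\mathcal H^i f_{\widetilde{\#}}\text{Gr}^F\mathcal M$.

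The one point that needs genuine care — and which I expect to be the main obstacle — is justifying that $\widetilde{\text{Gr}}^F$ commutes with the derived tensor product $\otimes^{\mathbf L}_{\widetilde{\mathscr D}_{(X,D^X)}}$ \emph{and} simultaneously converts it into $\otimes^{\mathbf L}_{\mathcal A_{(X,D^X)}}$, i.e. the base-change identity $\bigl(\mathcal M\otimes^{\mathbf L}_{\widetilde{\mathscr D}_{(X,D^X)}}\mathcal N\bigr)\otimes^{\mathbf L}_{\widetilde{\mathbb C}}\widetilde{\mathbb C}/z \simeq (\widetilde{\text{Gr}}^F\mathcal M)\otimes^{\mathbf L}_{\mathcal A_{(X,D^X)}}(\widetilde{\text{Gr}}^F\mathcal N)$. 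This is a Tor-spectral-sequence/flat-base-change argument along the central non-zero-divisor $z$: one needs that $\widetilde{\mathscr D}_{(X,D^X)}$ is $\widetilde{\mathbb C}$-flat (clear, it is a Rees algebra) so that $\widetilde{\mathscr D}_{(X,D^X)}\otimes^{\mathbf L}_{\widetilde{\mathbb C}}\widetilde{\mathbb C}/z = \mathcal A_{(X,D^X)}$, and then a standard associativity/change-of-rings manipulation. I would phrase it by replacing $\mathcal M$ and $\widetilde{\mathscr D}_{(X,D^X)\to(Y,D^Y)}$ by the $\widetilde{\mathscr D}_{(X,D^X)}$-bimodule Spencer resolution so that everything in sight is a bounded complex of sheaves that are flat over both $\widetilde{\mathbb C}$ and $\widetilde{\mathscr D}_{(X,D^X)}$, reducing the claim to the term-by-term statement $\text{Gr}^F\widetilde{\mathscr D}_{(X,D^X)\to(Y,D^Y)} = f^*\mathcal A_{(Y,D^Y)}$ together with flatness bookkeeping; the commutation with $\mathbf R f_*$ is then routine since $f$ is proper and $\widetilde{\text{Gr}}^F$ is affine base change in the $z$-direction.
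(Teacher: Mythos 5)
Your proposal is correct and follows essentially the same route as the paper's proof: identify $\widetilde{\text{Gr}}^F$ of the relative Spencer complex with $\text{Gr}^F\mathcal{M}\otimes^{\mathbf{L}}_{\mathcal{A}_{(X,D^X)}}f^*\mathcal{A}_{(Y,D^Y)}$ using associativity of the tensor product and strictness of $\mathcal{M}$, commute $\widetilde{\text{Gr}}^F$ with $\mathbf{R}f_*$ by the projection formula, and then use strictness of $f_\#\mathcal{M}$ together with the degeneration of the spectral sequence to commute $\widetilde{\text{Gr}}^F$ with taking cohomology. The extra care you devote to the base-change identity along $z$ is exactly the content the paper compresses into ``by the associativity of tensor product and $\mathcal{M}$ being strict.''
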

\begin{proof}
Denote 
$$\mathcal{N}^\bullet=\mathcal{M}^\bullet \otimes^{\mathbf{L}}_ {\widetilde{\mathscr{D}}_{\left(X,D^X\right)}}\left(\widetilde{\mathscr{O}}_X\otimes_{f^{-1}\widetilde{\mathscr{O}}_Y}f^{-1}\widetilde{\mathscr{D}}_{\left(Y,D^Y\right)}\right) .$$
By the associativity of tensor product and $\mathcal{M}$ being strict, we have  
$$\widetilde{\text{Gr}}^F \mathcal{N}^\bullet=\text{Gr}^F\mathcal{M}\otimes^{\mathbf{L}}_{\mathcal{A}_{\left(X,D^X\right)}}f^*\mathcal{A}_{\left(Y,D^Y\right)}.$$
Further, by the projection formula, we have 
$$R^if_* \widetilde{\text{Gr}}^F \mathcal{N}\simeq \mathcal{H}^i\widetilde{\text{Gr}}^F\mathbf{R}f_*\mathcal{N}.$$
Now we only need to show that 
$$\mathcal{H}^i\widetilde{\text{Gr}}^F\mathbf{R}f_*\mathcal{N}\simeq \widetilde{\text{Gr}}^F R^if_*\mathcal{N}.$$
It follows evidently by the commutativity of taking cohomology and $\widetilde{\text{Gr}}^F$ functor under the strictness assumption on $f_\#\mathcal{M}^\bullet\simeq \mathbf{R}f_*\mathcal{N}$.
\end{proof}

\begin{prop}\label{P:dual laumon}
Fix a $\mathcal{M}\in MG\left(\widetilde{\mathscr{D}}_{\left(X,D\right)}\right)$ and assume $\mathbf{D}_{\left(X,D\right)}\mathcal{M}\in MG\left(\widetilde{\mathscr{D}}_{\left(X,D\right)}\right),$ which means it has only one non-trivial cohomology at cohomology degree $0$. We further assume that both $\mathcal{M}$ and $\mathbf{D}_{\left(X,D\right)}\mathcal{M}$ are strict.  Then we have 
$$\text{Gr}^F \mathbf{D}_{\left(X,D\right)}\mathcal{M}\simeq \mathbf{R}\mathcal{H}om_{\mathcal{A}_{\left(X,D\right)}}\left(\text{Gr}^F \mathcal{M}, \omega_X[d_X]\otimes_{\mathscr{O}_X}\mathcal{A}_{\left(X,D\right)}\right).$$
Note that the sections of $\mathcal{A}_{\left(X,D\right),k}:=\textsl{Sym}^k\mathcal{T}_{\left(X,D\right)}$ act on the right-hand with an extra factor of $(-1)^k$. It is due to that we are using the induced tenser log-$\mathscr{D}$ module structure on the right-hand side.
If we consider both sides as $\mathscr{O}_{T^*_{\left(X,D\right)}}$-complexes, we get
$$\mathcal{G}\left(\mathbf{D}_{\left(X,D\right)}\mathcal{M}\right)\simeq \left(-1\right)^*_{T^*_{\left(X,D\right)}}
\mathbf{R}\mathcal{H}om_{\mathscr{O}_{T^*_{\left(X,D\right)}}}
\left(\mathcal{G}(\mathcal{M}), p_X^*\omega_X[d_X]
\otimes \mathscr{O}_{T^*_{\left(X,D\right)}}\right).$$
\end{prop}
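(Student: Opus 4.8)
The plan is to follow the pattern of the proof of Laumon's formula (Proposition \ref{P:laumon}): present $\mathbf{D}_{(X,D)}\mathcal{M}$ through a resolution which is at once adapted to $\mathbf{R}\mathcal{H}om_{\widetilde{\mathscr{D}}_{(X,D)}}\left(-,\left(\widetilde{\omega}_X[d_X]\otimes\widetilde{\mathscr{D}}_{(X,D)}\right)_{\text{tens}}\right)$ and, after applying $\text{Gr}^F$, adapted to $\mathbf{R}\mathcal{H}om_{\mathcal{A}_{(X,D)}}\left(-,\omega_X[d_X]\otimes_{\mathscr{O}_X}\mathcal{A}_{(X,D)}\right)$, and then to use the two strictness hypotheses to commute $\text{Gr}^F$ past $\mathcal{H}om$ and past cohomology. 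The point is that $\mathbf{D}_{(X,D)}$ commutes with $\text{Gr}^F$ once everything in sight is strict: strictness of $\mathcal{M}$ will make $\text{Gr}^F$ of the chosen resolution a resolution of $\text{Gr}^F\mathcal{M}$, and strictness of $\mathbf{D}_{(X,D)}\mathcal{M}$ will let us replace the derived $\widetilde{\text{Gr}}^F$ of the resulting $\mathcal{H}om$-complex by its termwise $\text{Gr}^F$.

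Working locally on $X$, I would choose a resolution $L^\bullet\to\mathcal{M}$ by graded-free $\widetilde{\mathscr{D}}_{(X,D)}$-modules, which exists since $\mathcal{M}$ is coherent. Strictness of $\mathcal{M}$ already forces $\text{Gr}^F L^\bullet$ to be a graded-free $\mathcal{A}_{(X,D)}$-resolution of $\text{Gr}^F\mathcal{M}$: the complex $L^\bullet$ is strict with $z$-torsion-free terms, so $\widetilde{\text{Gr}}^F L^\bullet$ agrees termwise with $\text{Gr}^F L^\bullet$ and is concentrated in degree $0$ with cohomology $\widetilde{\text{Gr}}^F\mathcal{M}=\text{Gr}^F\mathcal{M}$, while $\text{Gr}^F\widetilde{\mathscr{D}}_{(X,D)}=\mathcal{A}_{(X,D)}$. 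Then $\mathbf{D}_{(X,D)}\mathcal{M}\simeq\mathcal{H}om^\bullet_{\widetilde{\mathscr{D}}_{(X,D)}}\left(L^\bullet,\left(\widetilde{\omega}_X[d_X]\otimes\widetilde{\mathscr{D}}_{(X,D)}\right)_{\text{tens}}\right)$, and each term of this $\mathcal{H}om$-complex is again graded-free over $\widetilde{\mathscr{D}}_{(X,D)}$ for the trivial right structure: by the involution $\tau$ of Proposition \ref{P:involution} the tensor structure on $\widetilde{\omega}_X[d_X]\otimes\widetilde{\mathscr{D}}_{(X,D)}$ used in the $\mathcal{H}om$ is carried to the trivial one, which is free. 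In particular this complex has $z$-torsion-free terms and, being quasi-isomorphic to the strict object $\mathbf{D}_{(X,D)}\mathcal{M}$, is strict as a complex; hence its derived $\widetilde{\text{Gr}}^F$ coincides with its termwise $\text{Gr}^F$, which is $\text{Gr}^F\mathbf{D}_{(X,D)}\mathcal{M}$.

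It remains to compute that termwise $\text{Gr}^F$. Since $\text{Gr}^F$ commutes with finite direct sums and with $\mathcal{H}om_{\widetilde{\mathscr{D}}_{(X,D)}}$ out of a graded-free module, and $\text{Gr}^F\left(\widetilde{\omega}_X\otimes\widetilde{\mathscr{D}}_{(X,D)}\right)=\omega_X\otimes_{\mathscr{O}_X}\mathcal{A}_{(X,D)}$, the complex in question becomes $\mathcal{H}om^\bullet_{\mathcal{A}_{(X,D)}}\left(\text{Gr}^F L^\bullet,\ \omega_X[d_X]\otimes_{\mathscr{O}_X}\mathcal{A}_{(X,D)}\right)$, and since $\text{Gr}^F L^\bullet$ is a graded-free resolution of $\text{Gr}^F\mathcal{M}$ this computes $\mathbf{R}\mathcal{H}om_{\mathcal{A}_{(X,D)}}\left(\text{Gr}^F\mathcal{M},\ \omega_X[d_X]\otimes_{\mathscr{O}_X}\mathcal{A}_{(X,D)}\right)$; gluing the canonical local identifications yields the first isomorphism. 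The sign appears here: the differentials of the $\mathcal{H}om$-complex involve the tensor right $\widetilde{\mathscr{D}}_{(X,D)}$-action on $\widetilde{\omega}_X\otimes\widetilde{\mathscr{D}}_{(X,D)}$, and from $(m\otimes P)\cdot_{\text{tens}}\xi=m\xi\otimes P-m\otimes\xi P$ together with the fact that the summand $m\xi\otimes P$ drops in associated degree, one sees that on $\text{Gr}^F$ this action is the standard action of $\mathcal{A}_{(X,D)}=\textsl{Sym}\,\mathcal{T}_{(X,D)}$ twisted by $(-1)^k$ on $\textsl{Sym}^k\mathcal{T}_{(X,D)}$, whereas the final right structure on $\mathbf{D}_{(X,D)}\mathcal{M}$ comes from the trivial structure and is untwisted; this accounts for the extra $(-1)^k$ in the statement.

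For the $\mathscr{O}_{T^*_{(X,D)}}$-version I would reinterpret graded $\mathcal{A}_{(X,D)}$-modules as $\mathscr{O}_{T^*_{(X,D)}}$-modules via $T^*_{(X,D)}=\text{Spec}\,\mathcal{A}_{(X,D)}$; the $(-1)^k$-twist on $\textsl{Sym}^k\mathcal{T}_{(X,D)}$ is exactly pullback along the fibrewise antipode $(-1)_{T^*_{(X,D)}}\colon T^*_{(X,D)}\to T^*_{(X,D)}$, while $\omega_X\otimes_{\mathscr{O}_X}\mathcal{A}_{(X,D)}=p_X^*\omega_X\otimes\mathscr{O}_{T^*_{(X,D)}}$, which gives the claimed identity for $\mathcal{G}\left(\mathbf{D}_{(X,D)}\mathcal{M}\right)$. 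I do not expect any single deep step; the main obstacle is the simultaneous bookkeeping — arranging one resolution that is graded-free over $\widetilde{\mathscr{D}}_{(X,D)}$, strict as a complex, and $\text{Gr}^F$-free over $\mathcal{A}_{(X,D)}$, and then keeping the left/right and trivial/tensor $\widetilde{\mathscr{D}}_{(X,D)}$-structures straight through $\mathcal{H}om$ and $\text{Gr}^F$ so that the sign $(-1)^k$ comes out correctly. As in Proposition \ref{P:laumon}, the strictness of $\mathcal{M}$ and of $\mathbf{D}_{(X,D)}\mathcal{M}$ is essential: without it $\widetilde{\text{Gr}}^F$ would cease to commute with cohomology or with the relevant $\mathcal{H}om$'s.
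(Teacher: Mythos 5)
Your proposal is correct and follows essentially the same route as the paper: the paper's proof also takes a local free $\widetilde{\mathscr{D}}_{\left(X,D\right)}$-resolution of $\mathcal{M}$, uses strictness of $\mathcal{M}$ to identify the $\mathcal{H}om$-complex after $\widetilde{\text{Gr}}^F$ with $\mathbf{R}\mathcal{H}om_{\mathcal{A}_{\left(X,D\right)}}\left(\text{Gr}^F\mathcal{M},\omega_X[d_X]\otimes_{\mathscr{O}_X}\mathcal{A}_{\left(X,D\right)}\right)$, and then invokes strictness of $\mathbf{D}_{\left(X,D\right)}\mathcal{M}$ to conclude. You have simply spelled out the bookkeeping (torsion-freeness of the $\mathcal{H}om$-terms via the involution, the origin of the sign $(-1)^k$) that the paper leaves implicit.
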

\begin{proof}
Since $\mathcal{M}$ is strict, we have
\begin{align*}
&\mathbf{R}\mathcal{H}om_{\widetilde{\mathscr{D}}_{\left(X,D\right)}}\left(\mathcal{M}, \left(\widetilde{\omega}_X[d_X]\otimes \widetilde{\mathscr{D}}_{\left(X,D\right)}\right)_{\text{tens}}\right)\otimes^{\mathbf{L}}_{\widetilde{\mathbb{C}}}\left(\widetilde{\mathbb{C}}/z\widetilde{\mathbb{C}}\right)\\
\simeq&  \mathbf{R}\mathcal{H}om_{\mathcal{A}_{\left(X,D\right)}}\left(\mathcal{M}\otimes_{\widetilde{\mathbb{C}}}\left(\widetilde{\mathbb{C}}/z\widetilde{\mathbb{C}}\right), \omega_X[d_X]\otimes_{\mathscr{O}_X}\mathcal{A}_{\left(X,D\right)}\right),
\end{align*}
which can be checked locally by taking a resolution of $
\mathcal{M}$ by free $\widetilde{\mathscr{D}}_{\left(X,D\right)}$-modules. Now the statement is clear by the strictness assumption on $\mathbf{D}_{\left(X,D\right)}\mathcal{M}$.
\end{proof}

\section{Kashiwara-Malgrange filtration on coherent $\widetilde{\mathscr{D}}_X$-modules}
In this section, we recall some basic properties of $\mathbb{R}$-indexed Kashiwara-Malgrange filtration on a coherent strict $\widetilde{\mathscr{D}}$-module with respect to a smooth divisor from \cite[3.1]{Sa88}. Then we define multi-indexed rational Kashiwara-Malgrange filtration with respect to a simply normal crossing divisor when the $\widetilde{\mathscr{D}}$-module is of normal crossing type, and recall some properties that we will need in later sections.

We say that an $\mathbb{R}$-indexed increasing filtration $V$ is indexed by $A+\mathbb{Z}$, where $A$ is a finite subset of $\left[-1,0\right)$, if $gr^V_a:=V_a/V_{a<0}=0$ if and only if $a\notin A+\mathbb{Z}.$ All $\mathbb{R}$-indexed filtrations in this paper are indexed by $A+\mathbb{Z}$ for some finite set $A\subset \left[-1,0\right)$. In this paper, all $\mathbb{R}$-indexed increasing filtrations are assumed to be indexed by $A+\mathbb{Z}$, for some finite set $A\subset \left[-1,0\right)$. 

Fix a smooth variety $X$ and a smooth divisor $H$ on $X$. Let $t$ be a local function that defines $H$, and $\partial_t$ be a local vector field satisfying $[\partial_t, t]=1$.
\begin{definition}\label{D:KM-filtration on H}
Let $\mathcal{M}$ be a coherent strict $\widetilde{\mathscr{D}}_X$-module. We say that an $\mathbb{R}$-indexed increasing filtration $V^H_\bullet$ on $\mathcal{M}$ is a \emph{Kashiwara-Malgrange filtration} with respect to $H$, if\\
(1)$\cup_{a\in \mathbb{R}}V^H_a \mathcal{M}=\mathcal{M}$, and each filtered piece $V_a \mathcal{M}$ is a coherent $\widetilde{\mathscr{D}}_{\left(X,H\right)}$-module.\\
(2)$\left(V_a^H \mathcal{M}\right)\left(V_i^H \widetilde{\mathscr{D}}_X\right)\subset V_{a+i}^H \mathcal{M}$ for all $a\in \mathbb{R}, i\in \mathbb{Z}$, and
$\left(V^H_a \mathcal{M}\right)t=V^H_{a-1}\mathcal{M}$ if $a<0$.\\
(3)The action $t\widetilde{\partial}_t-a$ over $\text{gr}^{V^H}_a \mathcal{M}$ is nilpotent for any $a\in \mathbb{R}$.\\
Note that $t\in V^H_{-1}\widetilde{\mathscr{D}}_X$, $\widetilde{\partial}_t=\partial_t\cdot z\in V^H_1\widetilde{\mathscr{D}}_X$, hence condition (3) implies\\
(4)$t: \text{gr}^{V^H}_a \mathcal{M}\to \text{gr}^{V^H}_{a-1} \mathcal{M}$ and $\widetilde{\partial}_t: \text{gr}^{V^H}_{a-1} \mathcal{M}\to  \text{gr}^{V^H}_a \mathcal{M}z$ are bijective for $a\neq 0$.\\
(5)$V^H_{a+i}\mathcal{M}=\left(V_a \mathcal{M}\right)\left(V_i^H \widetilde{\mathscr{D}}_X\right)$ for $a\geq0$, $i\geq 0$.\\
Further, all the conditions above are independent from the choice of $t$ and $\widetilde{\partial}_t$.
\end{definition}

We know that given a mixed Hodge module $\mathcal{M}$, in particular being strictly $\mathbb{R}$-specializable along $H$, there exists a rational Kashiwara-Malgrange filtration with respect to $H$ and it is unique, \cite[3.1.2. Lemme.]{Sa88}, \cite[7.3.c]{SS16}. Further, we know that every $gr^{V^H}_a\mathcal{M}$ is a strict $gr^{V^H}_0 \widetilde{\mathscr{D}}_X$-module. \cite[Proposition 7.3.26]{SS16} 

Recall that given a coherent $\widetilde{\mathscr{D}}_X$-module $\mathcal{M}$ equipped with a Kashiwara-Malgrange filtration $V^H_\bullet$, we have a free resolution:

\begin{prop}\label{P:free resolution}
Fix $\mathcal{M}$ a strict $\widetilde{\mathscr{D}}_X$-module that is equipped with the Kashiwara-Malgrange filtration $V^H_\bullet$ with respect to a smooth divisor $H$ on $X$. Then locally over $X$ we can find a resolution:
$$\left(\mathcal{M}^\bullet, V^H_\bullet\right) \to \left(\mathcal{M}, V^H_\bullet\right),$$
where each $\left(\mathcal{M}^i, V^H_\bullet \right)$ is a direct sum of  finite copies of $\left(\widetilde{\mathscr{D}}_X\cdot z^p , V^H[ a ]\right)$, with ${-1}\leq  a \leq 0, p\in\mathbb{Z}$. Further, the resolution is strict with respect to $V^H_\bullet$, which means it is still a resolution after taking any filtered piece of $V^H_\bullet$.

Further, if we have that the multiplication by $t$ induces an isomorphism 
$$t: \text{gr}^{V^H}_0 \mathcal{M}\to \text{gr}^{V^H}_{-1}\mathcal{M},$$
in particular, when $\mathcal{M}=\mathcal{M}[*H]$, the above resolution can be achieved with ${-1}< a \leq 0.$

If we have that applying $\widetilde{\partial}_x$ induces an isomorphism 
$$\widetilde{\partial}_x: \text{gr}^{V^H}_{-1} \mathcal{M}\to \text{gr}^{V^H}_{0}\mathcal{M}\cdot z,$$
in particular, when $\mathcal{M}=\mathcal{M}[!H]$, the above resolution can be achieved with ${-1}\leq a < 0.$
\end{prop}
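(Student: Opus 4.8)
The plan is to build the resolution degree by degree, starting from a filtered free presentation adapted to the $V^H$-filtration and then iterating. First I would recall (or reprove in a line) the standard fact that a strict coherent $\widetilde{\mathscr{D}}_X$-module $\mathcal{M}$ with Kashiwara--Malgrange filtration $V^H_\bullet$ admits, locally, a finite set of generators lying in the pieces $V^H_a\mathcal{M}$ with $-1\le a\le 0$: indeed by condition (5) of Definition \ref{D:KM-filtration on H} every $V^H_{a}$ with $a\ge 0$ is generated over $\widetilde{\mathscr{D}}_{(X,H)}$ by $V^H_0$ after applying powers of $\widetilde{\partial}_t$, and by condition (2), $V^H_{a-1}=V^H_a\cdot t$ for $a<0$, so $\mathcal{M}=\cup_a V^H_a\mathcal{M}$ is generated by $V^H_{-1}$ and $V^H_0$; coherence of these pieces over $\widetilde{\mathscr{D}}_{(X,H)}$ (again condition (1)) lets me pick finitely many generators in $V^H_{-1}\mathcal{M}$ and $V^H_0\mathcal{M}$. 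This yields a surjection $\mathcal{M}^0:=\bigoplus_j \widetilde{\mathscr{D}}_X\cdot z^{p_j}$ with $V^H[a_j]$, $a_j\in\{-1,0\}$, onto $\mathcal{M}$, and — crucially — because each chosen generator $m_j$ sits exactly in $V^H_{a_j}$, the induced map respects $V^H$ and is \emph{strict} for $V^H$: for every $b$, $V^H_b\mathcal{M}^0$ surjects onto $V^H_b\mathcal{M}$. Here $V^H_b(\widetilde{\mathscr{D}}_X z^p[a])=V^H_{b-a}\widetilde{\mathscr{D}}_X\cdot z^p$, and strictness of the surjection is exactly the statement $V^H_b\mathcal{M}=\sum_j V^H_{b-a_j}\widetilde{\mathscr{D}}_X\cdot m_j$, which is what conditions (2) and (5) give once $m_j$ generate $V^H_{-1}$ and $V^H_0$.

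Next I would pass to the kernel $\mathcal{K}^0\subset\mathcal{M}^0$ with its induced filtration $V^H_b\mathcal{K}^0=\mathcal{K}^0\cap V^H_b\mathcal{M}^0$. The point is that $\mathcal{K}^0$ is again a strict coherent $\widetilde{\mathscr{D}}_X$-module carrying a filtration with the same formal properties as a Kashiwara--Malgrange filtration — or at least with enough of them (coherence of each $V^H_b\mathcal{K}^0$ over $\widetilde{\mathscr{D}}_{(X,H)}$, the identities $V^H_{b}\cdot t = V^H_{b-1}$ for $b<0$ from strict $\mathbb{R}$-specializability being inherited, and finite generation over the $V$-filtered ring) to rerun the previous paragraph. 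Strictness of the surjection $\mathcal{M}^0\to\mathcal{M}$ ensures that the short exact sequence $0\to\mathcal{K}^0\to\mathcal{M}^0\to\mathcal{M}\to0$ stays exact after applying $V^H_b(-)$ for every $b$, so $\mathrm{gr}^{V^H}$ of it is a short exact sequence of $\mathrm{gr}^{V^H}_0\widetilde{\mathscr{D}}_X$-modules; combined with strictness of $\mathrm{gr}^{V^H}_a\mathcal{M}$ over $\mathrm{gr}^{V^H}_0\widetilde{\mathscr{D}}_X$ (cited from \cite[Prop.~7.3.26]{SS16}) this propagates strictness of $\mathcal{K}^0$. Iterating produces $\cdots\to\mathcal{M}^1\to\mathcal{M}^0\to\mathcal{M}\to0$ with each $\mathcal{M}^i$ a sum of copies of $\widetilde{\mathscr{D}}_X z^p[a]$, $-1\le a\le0$, and with the whole complex strict for $V^H$, i.e.\ exact after applying any $V^H_b$. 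Since $X$ is smooth of dimension $d_X$ and $\widetilde{\mathscr{D}}_X$ has finite global homological dimension locally, the process terminates; alternatively one truncates after $d_X+1$ steps and notes the last kernel is $\widetilde{\mathscr{D}}_X$-locally free, automatically of the required filtered form.

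For the two refinements: when $t:\mathrm{gr}^V_0\mathcal{M}\to\mathrm{gr}^V_{-1}\mathcal{M}$ is an isomorphism — which holds for $\mathcal{M}=\mathcal{M}[*H]$ by the definition of the $*$-localization — one can eliminate the generators of index $-1$, because any $m\in V^H_{-1}\mathcal{M}$ is, modulo $V^H_{<-1}$, equal to $m't$ for some $m'\in V^H_0$, and $V^H_{<-1}=V^H_{-1}\cdot(\text{something in }t\widetilde{\mathscr{D}}_{(X,H)})$ lets one iterate/Nakayama away the correction, so $\mathcal{M}$ is already generated by $V^H_0$ alone; the same reduction applies at each stage of the resolution since each kernel inherits the surjectivity of $t$ in degree $0$ from the snake lemma applied to $\mathrm{gr}^{V}$ of the strict short exact sequence. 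This gives a resolution with $-1<a\le0$, i.e.\ $a=0$. Dually, when $\widetilde{\partial}_t:\mathrm{gr}^V_{-1}\mathcal{M}\to\mathrm{gr}^V_0\mathcal{M}\cdot z$ is an isomorphism — the case $\mathcal{M}=\mathcal{M}[!H]$ — one instead chooses generators only in $V^H_{-1}$, pushing the index-$0$ part up via $\widetilde{\partial}_t$, and obtains $-1\le a<0$, i.e.\ $a=-1$. The main obstacle is the bookkeeping in the inductive step: one must check that the kernel of a strict filtered map between objects of the above special shape still has the structural properties needed to restart the construction (coherence of each filtered piece over $\widetilde{\mathscr{D}}_{(X,H)}$, the $t$-divisibility identities, and — for the refinements — the preservation of bijectivity of $t$ resp.\ $\widetilde{\partial}_t$ on $\mathrm{gr}^V_0$ resp.\ $\mathrm{gr}^V_{-1}$); all of these follow from strictness of the short exact sequence together with \cite[Prop.~7.3.26]{SS16} and the $V$-filtration axioms, but assembling them cleanly is where the real work lies.
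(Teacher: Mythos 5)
Your overall architecture --- a filtered free presentation adapted to $V^H_\bullet$, strictness of the surjection, the kernel with its induced filtration inheriting the relevant properties from the strict short exact sequence, iteration, and the two refinements via the isomorphisms induced by $t$ and $\widetilde{\partial}_t$ --- is exactly the argument of \cite[3.3.6.--3.3.9.]{Sa88}, which is all the paper's own proof invokes (it is essentially a citation). However, there is a concrete error at the very first step: you place all generators in degrees $a_j\in\{-1,0\}$ and justify strictness of $\mathcal{M}^0\to\mathcal{M}$ by the fact that the $m_j$ generate $V^H_{-1}\mathcal{M}$ and $V^H_0\mathcal{M}$ over $\widetilde{\mathscr{D}}_{(X,H)}$. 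This fails whenever $V^H_\bullet$ has a jump at a non-integer index $a_0\in(-1,0)$, i.e.\ whenever the monodromy has an eigenvalue $\neq 1$. Since $V^H_\bullet\widetilde{\mathscr{D}}_X$ is integer-indexed, at level $b=a_0$ your candidate for $V^H_{a_0}\mathcal{M}^0$ maps onto $\sum_{a_j=0} m_j V^H_{-1}\widetilde{\mathscr{D}}_X+\sum_{a_j=-1} m_j V^H_{0}\widetilde{\mathscr{D}}_X\subset V^H_{-1}\mathcal{M}$, which misses all of $V^H_{a_0}\mathcal{M}\setminus V^H_{-1}\mathcal{M}$. This is precisely why the statement allows \emph{all} $a\in[-1,0]$ (the filtrations here are indexed by $A+\mathbb{Z}$ for a finite $A\subset[-1,0)$), not just $a\in\{-1,0\}$: one must choose, for every jump index $a\in[-1,0]$, generators $m_j\in V^H_a\mathcal{M}$ whose classes generate $\mathrm{gr}^{V^H}_a\mathcal{M}$ over $\mathrm{gr}^{V^H}_0\widetilde{\mathscr{D}}_X$, and only then do conditions (2), (4), (5) yield $V^H_b\mathcal{M}=\sum_j m_jV^H_{b-a_j}\widetilde{\mathscr{D}}_X$ for all $b$. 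The same conflation reappears in your refinements (``$-1<a\le 0$, i.e.\ $a=0$'' and ``$-1\le a<0$, i.e.\ $a=-1$''): the isomorphisms $t:\mathrm{gr}^V_0\to\mathrm{gr}^V_{-1}$, resp.\ $\widetilde{\partial}_t:\mathrm{gr}^V_{-1}\to\mathrm{gr}^V_0\cdot z$, let you discard only the single index $a=-1$, resp.\ $a=0$; the non-integer indices remain.

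With that corrected, the rest of your plan does go through and matches the cited source: strictness of the kernel as a graded module is automatic (a submodule of a $z$-torsion-free module is $z$-torsion-free), and condition (4) of Definition \ref{D:KM-filtration on H} for the induced filtration on the kernel follows from the snake lemma applied to $\mathrm{gr}^{V^H}$ of the strict short exact sequence, the free modules $(\widetilde{\mathscr{D}}_Xz^p,V^H[a])$ satisfying (4) themselves. Two smaller points: the appeal to finite homological dimension is not needed for the way the resolution is used in the paper (a bounded-above resolution suffices), and the claim that a truncated last kernel is ``automatically of the required filtered form'' is unjustified as stated and should be either proved or removed.
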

\begin{proof}
Without the extra assumption, it is just \cite[3.3.9. Lemme.]{Sa88}, see also the proof of \cite[3.3.17. Proposition.]{Sa88}. With the extra assumption, it follows by the same proof.

See \cite[Proposition 9.3.4 and Proposition 9.4.2]{SS16}, for the corresponding isomorphism in the localization and dual localization case.
\end{proof}

\begin{remark*}
Actually, the existence of the free resolution as above only depends on the strictness assumption (4) in Definition \ref{D:KM-filtration on H}. See also \cite[3.3.6-3.3.9.]{Sa88}.
\end{remark*}
Now we start to consider the case that $D=D_1+...+D_n$ is a normal crossing divisor on $X$ with irreducible components $D_i$. Note that, since we only need to work locally, we can assume that all the components of $D$ are smooth. However, all the notations defined later are well defined globally, since they can be glued together, using the local data. For any 
$$\mathbf{a}=\left[a_1,...,a_n\right]\in \mathbb{R}^n,$$
 we denote 
\begin{equation}\label{E: def v fil}
    \mathbf{V}_{\mathbf{a}}^{D}\widetilde{\mathscr{D}}_X=\cap_{i} V_{a_i}^{D_i}\widetilde{\mathscr{D}}_X.
\end{equation}
For $\mathbf{a}=\mathbf{0}:=[0,...,0]$, $\mathbf{V}_{\mathbf{0}}^{D}\widetilde{\mathscr{D}}_X$ is just $\widetilde{\mathscr{D}}_{\left(X,D\right)}$.

Given a strict coherent $\widetilde{\mathscr{D}}_X$-module that possesses the Kashiwara-Malgrange filtration $V^{D_i}_\bullet$ with respect to all components $D_i$ of $D$, we define a multi-indexed Kashiwara-Malgrange filtration with respect to $D$ by 
$$\mathbf{V}^D_{\mathbf{a}}\mathcal{M}=\cap V^{D_i}_{a_i} \mathcal{M},$$
for any $\mathbf{a}=[a_1,...,a_n]\in \mathbb{R}^n.$ It is not hard to see that $\mathbf{V}^D_{\bullet}\mathcal{M}$ is a multi-indexed graded module over  $\mathbf{V}_{\bullet}^{D}\widetilde{\mathscr{D}}_X$, in the sense that
$$ \mathbf{V}^D_{\mathbf{a}}\mathcal{M}\cdot \mathbf{V}_{\mathbf{b}}^{D}\widetilde{\mathscr{D}}_X\subset \mathbf{V}^D_{\mathbf{a}+\mathbf{b}}\mathcal{M},$$
for any $\mathbf{a},\mathbf{b}\in \mathbb{R}^n.$

Given a subset $S\subset \{1,...,n\}$, we denote $D_S=\sum_{i\in S}D_i$.
For any such $D_S$, we denote 
$$\mathbf{V}^{D_S}_{\mathbf{a}}\mathcal{M}:=\cap_{i\in S} V^{D_i}_{a_i} \mathcal{M}.$$

Further, we say that $\mathbf{b}< \mathbf{a}$ if $b_i<a_i$ for all $i$. We denote 
\begin{equation}\label{E: def v fil'}
    \mathbf{V}^D_{<\mathbf{a}}\mathcal{M}:=\cup_{\mathbf{b}<\mathbf{a}} \mathbf{V}^D_{\mathbf{b}}\mathcal{M}.
\end{equation}

In general, the $n$ filtrations $V^{D_i}_\bullet$ do not behave well between each other, in the sense that, when we look $V^{D_1}_\bullet$ as a filtration on $V^{D_2}_0\mathcal{M}$, it does not have good strictness conditions as in Definition \ref{D:KM-filtration on H}. This motivates us to make the following definition, which will be essentially used in proving our main results.
\begin{definition}\label{D:V strict}
Notations as above, we say that $\mathbf{V}^D_\bullet$, the multi-indexed Kashiwara-Malgrange filtration with respect to $D$ on $\mathcal{M}$ is \emph{V-compatible} if locally, we have the following strictness relations
\begin{align}
x_i: \mathbf{V}^{D}_{\mathbf{a}}\mathcal{M}  &\xrightarrow{\sim}\mathbf{V}^{D}_{\mathbf{a}-\mathbf{1}^i}\mathcal{M}\ (a_i<0) \label{E:strict for x_i} \\ 
\widetilde{\partial}_i: gr^{V^{D^i}}_{a_i-1}\mathbf{V}^{D-D_i}_{\mathbf{a}}\mathcal{M}  &\xrightarrow{\sim} gr^{V^{D^i}}_{a_i}\mathbf{V}^{D-D_i}_{\mathbf{a}}\mathcal{M}\cdot z\ (a_i>0),\label{E:strict for p_i}
\end{align}
where
\begin{align*}
\mathbf{1}^i:=&[0,...,0,1,0,...,0],
\end{align*}
with the $1$ at the $i$-th position. Further, we require that, by replacing $\mathcal{M}$ by $\mathcal{M}[*D]$ (resp. $\mathcal{M}[!D]$), the identity (\ref{E:strict for x_i}) (resp.  (\ref{E:strict for p_i}) ) still holds when $a_i=0$.
\end{definition}

In particular, if $\mathcal{M}$ is a strict coherent $\widetilde{\mathscr{D}}_X$-module of normal crossing type with respect to $D$, then we have that $\mathbf{V}^D_\bullet$ is V-compatible, \cite[3.11. Proposition.]{Sa90}, \cite[Lemma 11.2.20.]{SS16}. More generally, it also holds if $\mathcal{M}$ is of normal crossing type with respect to $D'$, where $D'$ is a normal crossing divisor that contains $D$. Of course, if $D$ is smooth, as long as $\mathcal{M}$ underlies a mixed Hodge module, or more generally, is $\mathbb{R}$-specializable along $D$, $V^D$ is trivially V-compatible.

Note that the previous definition is different from the compatibility of the $n$ $V$-filtrations on $\mathcal{M}$, in \cite{Sa88}, which we will not use in this paper.

We introduce the notation 
$\left(\widetilde{\mathscr{D}}_X\cdot z^p , \mathbf{V}^D[ \mathbf{a} ]\right)$, to denote $\widetilde{\mathscr{D}}_X$ equipped with shifted filtrations:
$$\mathbf{V}^D_\mathbf{b}\left(\widetilde{\mathscr{D}}_X\cdot z^p , \mathbf{V}^D[ \mathbf{a} ]\right):= \left(\mathbf{V}^D_{\mathbf{b}-\mathbf{a}} \widetilde{\mathscr{D}}_X\right)\cdot z^p.$$

As Proposition \ref{P:free resolution}, we have a similar free resolution:
\begin{prop}\label{P:free resolution NC}
Fix $\mathcal{M}$, a strict coherent $\widetilde{\mathscr{D}}_X$-module admitting a V-compatible multi-indexed Kashiwara-Malgrange filtration with respect to $D$, we can find a resolution:
$$\left(\mathcal{M}^\bullet, \mathbf{V}^D_\bullet\right) \to \left(\mathcal{M}, \mathbf{V}^D_\bullet\right),$$
where each $\left(\mathcal{M}^i, \mathbf{V}^D_\bullet \right)$ is a direct sum of  finite copies of $\left(\widetilde{\mathscr{D}}_X\cdot z^p , \mathbf{V}^D[ \mathbf{a} ]\right)$, with $\mathbf{-1}\leq  \mathbf{a} \leq \mathbf{0}, p\in\mathbb{Z}$. Further, the resolution is strict with respect to $\mathbf{V}^D_\bullet$, which means it is still a resolution after taking any filtered piece of $\mathbf{V}^D_\bullet$.

Further, if we have $\mathcal{M}=\mathcal{M}[*D]$  (resp. $\mathcal{M}=\mathcal{M}[!D]$ ), the above resolution can be achieved with $\mathbf{-1}< \mathbf{a} \leq \mathbf{0}$ (resp. $\mathbf{-1}\leq \mathbf{a} < \mathbf{0}$).
\end{prop}

\begin{proof}
Let $\Lambda=\{(\lambda_1,...,\lambda_n)\in \mathbb{R}^n \mid -1\leq \lambda_i\leq 0, gr^{V^{D_i}}_{\lambda_i}\mathcal{M}\neq 0, \text{for all } i=1,...,n\}$, which is a finite set.

As in the proof of \cite[3.3.9. Lemme.]{Sa88}, we only need to show that locally, the natural morphism:
$$u:\oplus_{p\in \mathbf{Z}, \mathbf{a}\in \Lambda}F_p\mathbf{V}^D_\mathbf{a} \mathcal{M}\otimes \left(\widetilde{\mathscr{D}}_X\cdot z^p, \mathbf{V}^D[ \mathbf{a} ]\right)\to \mathcal{M}$$
is strictly surjective, i.e. it is still surjective after taking the $\mathbf{V}^D_{\mathbf{a}}$ filtered piece, for any $\mathbf{a}\in \mathbb{R}^n$. Of course, it is the case for $\mathbf{-1}\leq \mathbf{a} \leq \mathbf{0}$. Due to the strictness condition (\ref{E:strict for x_i}) and (\ref{E:strict for p_i}), we have that $\mathbf{V}^D_\mathbf{a} \mathcal{M}$ is fully determined by  $\mathbf{V}^D_\mathbf{a'}\mathcal{M}$, where $\mathbf{a'} \in \Lambda$, such that $\mathbf{a}-\mathbf{a'}\in \mathbb{Z}^n$. In particular, induced by $u$, we have the surjection
$$\oplus_{p\in \mathbf{Z}} F_p\mathbf{V}^D_\mathbf{a'} \mathcal{M}\otimes \mathbf{V}^D_{\mathbf{a}-\mathbf{a'}} \widetilde{\mathscr{D}}_X\cdot z^p \to \mathbf{V}^D_\mathbf{a}\mathcal{M},$$
for general $\mathbf{a}.$ 
\end{proof}

\section{Comparison Theorem}
Fix a smooth variety $X$ and a normal crossing divisor $D=D_1+...+D_n$ on $X$ as in the previous sections. We start with the following vanishing.
\begin{lemma}\label{L:vanishing by torsion freeness}
Given a strict $\widetilde{\mathscr{D}}_{\left(X,D\right)}$-module $\mathcal{M}$. Let $D_S=D-D_1$ and $x_1$ be a local function that defines $D_1$. Assume that $\text{Gr}^F \mathcal{M}$ is torsion free with respect to $x_1$, then we have 
$$\mathcal{H}^i\left(\mathcal{M}\otimes^{\mathbf{L}}_{\widetilde{\mathscr{D}}_{\left(X,D\right)}} \widetilde{\mathscr{D}}_{\left(X,D_S\right)}\right)=0,$$
for $i\neq 0$.
\end{lemma}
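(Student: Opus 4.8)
The plan is to reduce the vanishing to a statement about a Koszul-type complex and then exploit the hypothesis that $\text{Gr}^F\mathcal{M}$ is $x_1$-torsion free. First I would observe that $\widetilde{\mathscr{D}}_{(X,D_S)}$, with $D_S = D - D_1$, differs from $\widetilde{\mathscr{D}}_{(X,D)}$ only in that the generator $x_1\partial_1$ has been ``upgraded'' to $\partial_1$; concretely, $\widetilde{\mathscr{D}}_{(X,D_S)}$ is the subring of $\widetilde{\mathscr{D}}_X$ generated over $\widetilde{\mathscr{D}}_{(X,D)}$ by the single element $\widetilde{\partial}_1 = z\partial_1$. So I want a convenient free (or at least flat) resolution of $\widetilde{\mathscr{D}}_{(X,D_S)}$ as a left $\widetilde{\mathscr{D}}_{(X,D)}$-module. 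The natural candidate is the two-term complex
\begin{equation*}
\bigl[\,\widetilde{\mathscr{D}}_{(X,D)} \xrightarrow{\;\cdot(x_1\widetilde{\partial}_1 - a z)\;} \widetilde{\mathscr{D}}_{(X,D)}\otimes_{?}\cdots\,\bigr]
\end{equation*}
but more honestly one should use that $\widetilde{\mathscr{D}}_{(X,D_S)} \cong \widetilde{\mathscr{D}}_{(X,D)}[\widetilde{\partial}_1]/(\text{commutation relation})$; since $\widetilde{\partial}_1$ acts on the left with $[\widetilde{\partial}_1, x_1] = z$ and $x_1\widetilde{\partial}_1 \in \widetilde{\mathscr{D}}_{(X,D)}$, the module $\widetilde{\mathscr{D}}_{(X,D_S)}$ is the union $\bigcup_{k\geq 0}\widetilde{\mathscr{D}}_{(X,D)}\cdot\widetilde{\partial}_1^{\,k}$, and one gets a length-one resolution
\begin{equation*}
0 \to \widetilde{\mathscr{D}}_{(X,D)}\otimes_{\widetilde{\mathscr{O}}_X}\widetilde{\mathscr{O}}_X \xrightarrow{\;\cdot x_1\;} \widetilde{\mathscr{D}}_{(X,D)}\otimes_{\widetilde{\mathscr{O}}_X}\widetilde{\mathscr{O}}_X[\widetilde{\partial}_1] \to \widetilde{\mathscr{D}}_{(X,D_S)} \to 0,
\end{equation*}
the kind of presentation one sees for localizations. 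Sorting out the precise shape of this resolution, and checking it is a resolution by $\widetilde{\mathscr{D}}_{(X,D)}$-modules that are flat on the right, is the first technical step.

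Granting such a resolution $P^\bullet \xrightarrow{\sim} \widetilde{\mathscr{D}}_{(X,D_S)}$ by right-flat $\widetilde{\mathscr{D}}_{(X,D)}$-modules, the derived tensor product $\mathcal{M}\otimes^{\mathbf{L}}_{\widetilde{\mathscr{D}}_{(X,D)}}\widetilde{\mathscr{D}}_{(X,D_S)}$ is computed by $\mathcal{M}\otimes_{\widetilde{\mathscr{D}}_{(X,D)}} P^\bullet$, and its higher cohomology is governed by multiplication-by-$x_1$ type maps on modules built from $\mathcal{M}$. The key point is then to pass to $\text{Gr}^F$: since $\mathcal{M}$ is strict, $\text{Gr}^F$ commutes with the relevant operations, and the complex $\text{Gr}^F(\mathcal{M}\otimes_{\widetilde{\mathscr{D}}_{(X,D)}}P^\bullet)$ is (a direct limit of) Koszul-type complexes on $\text{Gr}^F\mathcal{M}$ in which the only differential that can fail to be injective is multiplication by the symbol of $x_1$, i.e. by $x_1$ itself on $\text{Gr}^F\mathcal{M}$. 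By hypothesis $\text{Gr}^F\mathcal{M}$ is $x_1$-torsion free, so that map is injective; hence the graded complex has no cohomology outside degree $0$. Because $\mathcal{M}$ is strict and (one checks) the whole complex then has strict terms, the degeneration/strictness formalism of Section~3 (the spectral sequence $E_2^{p,q} = \mathcal{H}^p\widetilde{\text{Gr}}^F\mathcal{H}^q \Rightarrow \mathcal{H}^{p+q}\widetilde{\text{Gr}}^F$) lets us conclude that $\mathcal{H}^i$ of the original complex vanishes for $i\neq 0$: indeed if $\mathcal{H}^i(\mathcal{M}\otimes^{\mathbf{L}}\widetilde{\mathscr{D}}_{(X,D_S)})$ were nonzero for some $i\neq 0$ then, taking the largest such $i$, its $\widetilde{\text{Gr}}^F$ would survive to contribute to $\mathcal{H}^i\widetilde{\text{Gr}}^F$ of the complex, contradicting the vanishing just established (one should phrase this so as not to need strictness of the answer as an input—only that $\text{Gr}^F$ applied to the explicit finite complex has no higher cohomology).

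The plan therefore is: (i) write down the explicit short resolution of $\widetilde{\mathscr{D}}_{(X,D_S)}$ over $\widetilde{\mathscr{D}}_{(X,D)}$ and verify its flatness properties; (ii) tensor with $\mathcal{M}$ to get an explicit finite complex computing the derived tensor product; (iii) apply $\text{Gr}^F$, identify the resulting complex as a Koszul-type complex on $\text{Gr}^F\mathcal{M}$ whose only potentially non-injective differential is $\cdot x_1$, and invoke $x_1$-torsion freeness to kill higher cohomology of the graded complex; (iv) use the strictness of $\mathcal{M}$ and the spectral sequence from Section~3 to lift this to vanishing of $\mathcal{H}^i$ for $i\neq 0$ of the ungraded complex. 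The main obstacle I expect is step (i): getting the resolution of $\widetilde{\mathscr{D}}_{(X,D_S)}$ over $\widetilde{\mathscr{D}}_{(X,D)}$ in a form where the induced differential after $\otimes\,\mathcal{M}$ and $\text{Gr}^F$ is transparently ``multiplication by $x_1$,'' rather than some twisted operator mixing in $t\widetilde{\partial}_t - a$ terms. One clean way around this is to argue on associated graded from the start: $\text{Gr}^F\widetilde{\mathscr{D}}_{(X,D_S)} = \textsl{Sym}\,\mathcal{T}_{(X,D_S)}$ is a polynomial extension of $\text{Gr}^F\widetilde{\mathscr{D}}_{(X,D)} = \textsl{Sym}\,\mathcal{T}_{(X,D)}$ by inverting $x_1$ on the symbol $\xi_1 = \sigma(x_1\partial_1)$, so that on the $\text{Gr}^F$ level the derived tensor product is literally a localization-type complex whose $\mathcal{H}^{>0}$ is the $x_1$-torsion of $\text{Gr}^F\mathcal{M}$; this is where torsion-freeness enters, and then strictness does the rest.
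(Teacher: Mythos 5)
Your strategy is correct in outline and, once the technical step you yourself flag is carried out, it proves the lemma; but it is packaged differently from the paper. The paper never resolves $\widetilde{\mathscr{D}}_{\left(X,D_S\right)}$ over $\widetilde{\mathscr{D}}_{\left(X,D\right)}$: it resolves $\mathcal{M}$ by the induced-module form of the Spencer complex, $\textsl{Sp}_{\left(X,D\right)}\left(\mathcal{M}\otimes\widetilde{\mathscr{D}}_{\left(X,D\right)}\right)_{\text{tens}}$, replaces $\widetilde{\mathscr{D}}_{\left(X,D\right)}$ by $\widetilde{\mathscr{D}}_{\left(X,D_S\right)}$ termwise, and identifies $\text{Gr}^F$ of the result as the Koszul complex of $x_1\widetilde{\partial}_1,\dots,x_n\widetilde{\partial}_n,\widetilde{\partial}_{n+1},\dots,\widetilde{\partial}_{d_X}$ on $\mathcal{N}=\text{Gr}^F\left(\mathcal{M}\otimes\widetilde{\mathscr{D}}_{\left(X,D_S\right)}\right)_{\text{tens}}$; after quotienting by the last $d_X-1$ elements (a regular sequence for formal reasons) everything reduces to injectivity of $x_1\widetilde{\partial}_1$ on $\text{Gr}^F\mathcal{M}\otimes\mathcal{O}_X[\widetilde{\partial}_1]$, seen from the leading coefficient in $\widetilde{\partial}_1$, which is $-mx_1\otimes\widetilde{\partial}_1^{k+1}$ --- exactly where $x_1$-torsion-freeness enters. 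Your route reaches that same two-term computation directly, which is arguably more economical, but your displayed ``length-one resolution'' is wrong as written: as a left $\widetilde{\mathscr{D}}_{\left(X,D\right)}$-module, $\widetilde{\mathscr{D}}_{\left(X,D_S\right)}=\bigcup_k\widetilde{\mathscr{D}}_{\left(X,D\right)}\widetilde{\partial}_1^{\,k}$ has the presentation
\[
0\to \widetilde{\mathscr{D}}_{\left(X,D\right)}\otimes_{\widetilde{\mathbb{C}}}\widetilde{\mathbb{C}}[\theta]\xrightarrow{\ \phi\ }\widetilde{\mathscr{D}}_{\left(X,D\right)}\otimes_{\widetilde{\mathbb{C}}}\widetilde{\mathbb{C}}[\theta]\to\widetilde{\mathscr{D}}_{\left(X,D_S\right)}\to 0,\qquad \phi\left(P\otimes\theta^k\right)=Px_1\widetilde{\partial}_1\otimes\theta^k-Px_1\otimes\theta^{k+1},
\]
so the first term must be another full copy of the induced module (left multiplication cannot shift the $\theta$-degree, so one generator of the kernel is needed for each $k$), and the differential is the twisted operator you anticipated, not plain $\cdot x_1$. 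After $\otimes_{\widetilde{\mathscr{D}}_{\left(X,D\right)}}\mathcal{M}$ and $\text{Gr}^F$ this becomes $m\otimes\theta^k\mapsto mx_1\widetilde{\partial}_1\otimes\theta^k-mx_1\otimes\theta^{k+1}$ on $\text{Gr}^F\mathcal{M}[\theta]$, whose injectivity is the paper's computation verbatim. Two smaller points: the descent from graded to ungraded vanishing does not need the full spectral sequence --- each term of the complex is $z$-torsion-free and bounded below in the grading, so $\mathcal{H}^i\left(C^\bullet\right)/z\hookrightarrow\mathcal{H}^i\left(C^\bullet/zC^\bullet\right)=0$ for $i\neq 0$ plus graded Nakayama suffices; and in your last paragraph $\text{Gr}^F\widetilde{\mathscr{D}}_{\left(X,D_S\right)}$ is not a localization of $\text{Gr}^F\widetilde{\mathscr{D}}_{\left(X,D\right)}$ but the hypersurface quotient $\textsl{Sym}\,\mathcal{T}_{\left(X,D\right)}[\eta]/\left(x_1\eta-\xi_1\right)$, whose Koszul complex again lands you in the same two-term computation.
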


\begin{proof}
Recall the canonical left resolution of $\mathcal{M}$ in (\ref{E:M otimes spencer}). Note that we have
$$\left(\mathcal{M}\otimes\wedge^{i}\widetilde{\mathcal{T}}_{\left(X,D\right)}\otimes \widetilde{\mathscr{D}}_{\left(X,D\right)}\right)_\text{triv}\otimes^{\mathbf{L}}_{\widetilde{\mathscr{D}}_{\left(X,D\right)}} \widetilde{\mathscr{D}}_{\left(X,D_S\right)}
\simeq \left(\mathcal{M}\otimes\wedge^{i}\widetilde{\mathcal{T}}_{\left(X,D\right)}\otimes \widetilde{\mathscr{D}}_{\left(X,D_S\right)}\right)_\text{triv},$$
by the associativity of the tensor product. 
In particular (\ref{E:M otimes spencer}) is a resolution with acyclic objects with respect to the functors $\otimes^{\mathbf{L}}_{\widetilde{\mathscr{D}}_{\left(X,D\right)}} \widetilde{\mathscr{D}}_{\left(X,D_S\right)}$ and $\widetilde{\text{Gr}}^F$. Hence, after apply $\text{Gr}^F$ onto the resolution, we get a resolution of $\text{Gr}^F\mathcal{M}$, by acyclic objects with respect to $\otimes^{\mathbf{L}}_{\text{Gr}^F\widetilde{\mathscr{D}}_{\left(X,D\right)}} \text{Gr}^F\widetilde{\mathscr{D}}_{\left(X,D_S\right)}$. Now, we only need to show 
$$\mathcal{H}^i\left(\text{Gr}^F\mathcal{M}\otimes^{\mathbf{L}}_{\text{Gr}^F\widetilde{\mathscr{D}}_{\left(X,D\right)}} \text{Gr}^F\widetilde{\mathscr{D}}_{\left(X,D_S\right)}\right)=0,$$
for all $i\neq 0$. 

Let $\partial '_1,...,\partial '_{d_X}$
be the induced elements in $\text{Gr}^F_1 \widetilde{\mathscr{D}}_X$. Locally, we have isomorphisms
\begin{align*}
    \text{Gr}^F\widetilde{\mathscr{D}}_{\left(X,D\right)}&\simeq\mathcal{O}_X[\eta_1,...,\eta_{n},\xi_{n+1},...,\xi_{d_X}],\\
    \text{Gr}^F\widetilde{\mathscr{D}}_{\left(X,D_S\right)}&\simeq\mathcal{O}_X[\xi_1,\eta_2,...,\eta_{n},\xi_{n+1},...,\xi_{d_X}],
\end{align*}
by identifying $\xi_i=\partial '_i$, $\eta_i= x_i\partial'_i$.
Note that we have the short exact sequence 
$$0\to \text{Gr}^F\widetilde{\mathscr{D}}_{\left(X,D\right)}[\xi_1]\xrightarrow{\cdot(x_1\xi_1-\eta_1)}  \text{Gr}^F\widetilde{\mathscr{D}}_{\left(X,D\right)}[\xi_1]\to \text{Gr}^F\widetilde{\mathscr{D}}_{\left(X,D_S\right)}\to 0 
$$
as a free resolution of $\text{Gr}^F\widetilde{\mathscr{D}}_{\left(X,D_S\right)}$ as $\text{Gr}^F\widetilde{\mathscr{D}}_{\left(X,D\right)}$ modules. Now we are only left to show that $\text{Gr}^F\mathcal{M}[\xi_1]\xrightarrow{\cdot(x_1\xi_1-\eta_1)} \text{Gr}^F\mathcal{M}[\xi_1]$ is injective.
For any $m\in \text{Gr}^F\mathcal{M}[\xi_1]$, by considering the degree of $\xi_1$, if $m\cdot(x_1\xi_1-\eta_1)=0$, we must have $m\cdot x_1=0$. Since $\text{Gr}^F\mathcal{M}$ is torsion free, with respect to $x_1$, we have $m=0$.

\end{proof}

Now we consider $\mathcal{M}$ as a mixed Hodge module on $X$. Given a divisor $D$ with normal crossing support on $X$, we denote 
$$\mathcal{M}[*D]=i_*i^{-1}\mathcal{M},$$
and 
$$\mathcal{M}[!D]=i_!i^{-1}\mathcal{M}$$
being the localization and dual localization of $\mathcal{M}$ on $X\setminus D$ \cite[2.11 Proposition]{Sa90}, \cite[Chapter 9]{SS16}.
Note that both $\mathcal{M}[*D]$ and $\mathcal{M}[!D]$ are mixed Hodge modules, in particular, strict coherent $ \widetilde{\mathscr{D}}_X$-modules.

\begin{proof}[Proof of Theorem \ref{T:Comparison Theorem}]
Since $\mathcal{M}[*D]=\mathcal{M}[*D][*D_I]$, we can assume $D_I=D$ without lose of generality. Further, by the previous lemma and the induction on the number of components of $D$, to show (\ref{E:* comparison}), we only need to show the case that $D_S=D-D_1$, 
\begin{equation}\label{E:one tensor}
\mathbf{V}^{D}_{\mathbf{0}}\left( \mathcal{M}[*D]\right)\otimes_{\widetilde{\mathscr{D}}_{\left(X,D\right)}} \widetilde{\mathscr{D}}_{\left(X,D_S\right)} \simeq  \mathbf{V}^{D_{S}}_{\mathbf{0}}\left( \mathcal{M}[*D]\right).
\end{equation}
It follows, by a similar argument in \cite[Lemma 3.1.2]{Moc15}, by using the properties in Definition \ref{D:V strict}, so we omit the details here. See also \cite[Proposition 9.3.4]{SS16}.

(\ref{E:! comparison}) follows similarly, by \cite[Lemma 3.1.11]{Moc15}. See also \cite[Proposition 9.4.2]{SS16}.
\end{proof}

\begin{remark*}
Actually, Theorem \ref{T:Comparison Theorem} can also be proved by using the free resolution in Proposition \ref{P:free resolution NC}. More precisely, for the $\mathcal{M}[*D]$ case, we can get a resolution with each term being a direct sum of finite copies of $\left(\widetilde{\mathscr{D}}_X\cdot z^p , \mathbf{V}^D[ \mathbf{a} ]\right)$, with $\mathbf{-1}<  \mathbf{a} \leq \mathbf{0}, p\in\mathbb{Z}$. Note that in this case, we have 
$$\mathbf{V}^D_\mathbf0\left(\widetilde{\mathscr{D}}_X\cdot z^p , \mathbf{V}^D[ \mathbf{a} ]\right)=\widetilde{\mathscr{D}}_{(X,D)},$$ 
hence by taking the $\mathbf{V}^D_\mathbf0$ piece, we also get a free $\widetilde{\mathscr{D}}_{(X,D)}$-module resolution. A similar argument will be used in proving Theorem \ref{T:dual} in Section 7.
\end{remark*}

Applying the Spencer functor, we can recover the classical logarithmic comparison theorem.

\begin{proof}[Proof of Theorem \ref{T:Comparison Theorem Sp}]
By the definition of the Spencer functor, we have
\begin{align*}
&\textsl{Sp}_{\left(X,D\right)}\left(\mathcal{M}[*\log D]\right)\\
\simeq&\mathbf{V}^{D}_{{0}}\left(\mathcal{M}[*D]\right)\otimes^{\mathbf{L}}_{\widetilde{\mathscr{D}}_{\left(X,D\right)}} \widetilde{\mathscr{O}}_X\\
\simeq &\mathbf{V}^{D}_{{0}}\left(\mathcal{M}[*D]\right)\otimes^{\mathbf{L}}_{\widetilde{\mathscr{D}}_{\left(X,D\right)}}\left(\widetilde{\mathscr{D}}_{X}\otimes^{\mathbf{L}}_{\widetilde{\mathscr{D}}_{X}} \widetilde{\mathscr{O}}_X\right)\\
\simeq&\mathcal{M}[*D]\otimes^{\mathbf{L}}_{\widetilde{\mathscr{D}}_{X}} \widetilde{\mathscr{O}}_X\\
\simeq&\textsl{Sp}_X\left(\mathcal{M}[*D]\right).
\end{align*}
The third identity is due to associativity of tensor products and the comparison theorem, by taking $D_I=D, $ and $D_S=0$.

The dual localization case follows similarly.
\end{proof}

\section{Pushforward functor}
Fix $\mathcal{M}$, a strict coherent $\widetilde{\mathscr{D}}_X$-module, admitting a V-compatible multi-indexed Kashiwara-Malgrange filtration with respect to a normal crossing divisor $D$, and
$$D=r_1D_1+...+r_nD_n $$
be the decomposition of reduced and irreducible components. Denote $\mathbf{r}=\left[r_1,...,r_n\right]$.

Working locally, let $i:\left(X,D\right)\to \left(Y,H\right)$ be the graph embedding given by the local function $y=x_1^{r_1}...x_n^{r_n}$, where each $x_i$ is a local function that defines the divisor $D_i$ on $X$. We first build a relation between the Kashiwara-Malgrange filtration $\mathbf{V}^D_\bullet$ on $\mathcal{M}$ and $V^H_\bullet$ on $i_+\mathcal{M}$.

Let $\widetilde{\partial}_{1},...,\widetilde{\partial}_{n}, \widetilde{\partial}_y\in \widetilde{\mathscr{D}}_Y$ that are mutually commutative and satisfying  
\begin{align*}
\left[\partial_{i}, x_i\right]=&[\partial_y, y]=1,\\
\left[\partial_{i},x_j \right]=&[\partial_{i}, y]=[\partial_y,x_i]=0. \text{for} \ i\neq j
\end{align*}
Let $u=x_1^{r_1}...x_n^{r_n}-y$, and consider $x_1,...,x_n, u$ being a local coordinate system on $Y$. We can get $\widetilde{\partial}_{x_1},...,\widetilde{\partial}_{x_n}, \widetilde{\partial}_u\in \widetilde{\mathscr{D}}_Y$ that satisfying 
\begin{align*}
\left(m\otimes \delta\right)\partial_{x_i}=&m\partial_{x_i}\otimes \delta,\\
\left(m\otimes \delta\right)\partial_{u}=&m\otimes \delta\partial_{u},
\end{align*}
and we can further require that
\begin{align*}
\left[\partial_{x_i}, x_i\right]=&[\partial_u, u]=1,\\
\left[\partial_{x_i},x_j \right]=&[\partial_{x_i}, u]=[\partial_u, x_i]=0, \text{for} \ i\neq j.
\end{align*}
We can write $i_\#\mathcal{M}=\oplus_{k\in \mathbb{N}} \mathcal{M}\otimes \delta \widetilde{\partial}_u^k$ as a $\widetilde{\mathscr{D}}_Y=\widetilde{\mathscr{D}}_X[u]\langle\widetilde{\partial}_u\rangle$-module. By changing of coordinates, we have the following relations:
\begin{align*}
\left(m\otimes \delta\right)\widetilde{\partial}_y^k&=m\otimes \delta\widetilde{\partial}_u^k,\\
\left(m\otimes \delta\right)\widetilde{\partial}_i&=\left(m\widetilde{\partial}_{x_i}\right)\otimes \delta -\left(m r_ix_1^{r_1}...x_n^{r_n}/x_i \right)\otimes \delta \widetilde{\partial}_u,\\
\left(m\otimes \delta\right)y&=\left(m x_1^{r_1}...x_n^{r_n}\right)\otimes \delta,\\
\left(m\otimes \delta\right)\widetilde{\mathscr{O}}_X&=\left(m\widetilde{\mathscr{O}}_X\right)\otimes \delta,
\end{align*}
and with the usual commutation rules. 

\begin{lemma}\label{L:induced KM-filtration}
Notations as above, we can get the Kashiwara-Malgrange filtration $V^{H}_\bullet$ on $i_+\mathcal{M}$ from $\mathbf{V}^D_\bullet$ on $\mathcal{M}$ by:
\begin{equation}\label{E:a<0 induced KM-filtration}
V^{H}_a i_+\mathcal{M}=\left(\mathbf{V}^{D}_{a\cdot \mathbf{r}}\mathcal{M}\otimes \delta\right)\cdot \widetilde{\mathscr{D}}_{(Y,H)},\  \text{for}\ a< 0.
\end{equation}
For $a \geq 0$, we can get it inductively by
$$
V^{H}_ai_+\mathcal{M}\cdot z=V^{H}_{<a}\left(i_+\mathcal{M}\right)\cdot z +V^{H}_{a-1}\left(i_+\mathcal{M}\right)\widetilde{\partial}_y.
$$
If we further have that 
\begin{equation}\label{E:extra condition}
    x_i:V^{D_i}_0\mathcal{M}\to V^{D_i}_{-1}\mathcal{M}
\end{equation}
is an isomorphism for every component $D_i$, in particular when $\mathcal{M}=\mathcal{M}[*D]$, we have (\ref{E:a<0 induced KM-filtration}) still holds when $a=0$.
\end{lemma}
\begin{proof}
The proof is similar to the proof in \cite[Theorem 11.3.2]{SS16}. However, since our settings are a little bit more general, we spell out the details here.

We need to check the filtration we defined above satisfies the Condition (1), (2), (3) in Definition \ref{D:KM-filtration on H}. We only check the case with the extra condition here. The other case follows similarly.

For Condition (1), if $a\leq 0$ and if $m_1,...,m_l$ generate $\mathbf{V}^{D}_{a\cdot \mathbf{r}}\mathcal{M}$ over $\widetilde{\mathscr{D}}_{\left(X, D^X\right)}$, then $m_1\otimes \delta,...,m_l\otimes \delta$ generate $V^{H}_ai_+\mathcal{M}$ over $\widetilde{\mathscr{D}}_{\left(Y, D^Y\right)}$, due to the fact that we have the relation
$$\left(mx_i\widetilde{\partial}_{x_i}\right)\otimes \delta=\left(m\otimes \delta\right)\left(x_i\widetilde{\partial}_i+r_iy\widetilde{\partial}_y\right).$$
Hence, we can conclude that, for every $a$, $V^{H}_ai_+\mathcal{M}$ is coherent over $\widetilde{\mathscr{D}}_{\left(Y, D^Y\right)}$.
To show that 
$$\cup_a V^{H}_a i_+\mathcal{M}=i_+\mathcal{M},$$
we only need to show that 
$$\mathcal{M}\otimes \delta\subset \cup_a V^{H}_a i_+\mathcal{M}.$$
 Note that 
$$\left(\mathbf{V}^{D}_{\mathbf{0}} \mathcal{M}\right)\cdot \widetilde{\mathscr{D}}_X =\mathcal{M},$$
 which is due to the assumption that $\mathcal{M}$, hence $\mathcal{M}\left[*D\right]$ is of normal crossing type, hence the strictness condition (\ref{E:strict for p_i}). Then, we can use the relation
$$\left(m\widetilde{\partial}_{x_i}\right)\otimes \delta=\left(m\otimes \delta\right)\widetilde{\partial}_i+\left(m r_ix_1^{r_1}...x_n^{r_n}/x_i \right)\otimes \delta \widetilde{\partial}_u,
$$
to get that 
$$\cup_a V^{H}_a i_+\mathcal{M}\supset \left(\mathbf{V}^{D}_{\mathbf{0}} \mathcal{M}\otimes \delta\right)\cdot \widetilde{\mathscr{D}}_{Y}\supset \mathcal{M}\otimes \delta.$$

For Condition (2), we have that, for any $a$,
$$\left(V^{H}_a i_+\mathcal{M}\right) y\subset V^{H}_{a-1} i_+\mathcal{M},$$
which can be checked by using the relation 
$$\left(m\otimes \delta\right)y=\left(m x_1^{r_1}...x_n^{r_n}\right)\otimes \delta.$$
Further, the equality for $a<0$ can also be deduced from the corresponding properties on $\mathcal{M}$.
We are left to show 
$$\left(V^{H}_{a-1} i_+\mathcal{M}\right)\widetilde{\partial}_y \subset V^{H}_{a} i_+\mathcal{M}.$$
When $a> 0$, it follows by definition. When $a\leq 0$, for any $m\otimes \delta\in V^{H}_a i_+\mathcal{M}$,
considering the relation \cite[11.2.19]{SS16} and our extra assumption (\ref{E:extra condition}), we obtain
$$\left(\mathbf{V}^{D}_{\mathbf{a}}\mathcal{M}\right)x_i= \mathbf{V}^{D}_{\mathbf{a}-\mathbf{1}^i}\mathcal{M},$$
for $a\leq 0$. Hence, for any $m'\otimes \delta\in V^{H}_{a-1}i_+\mathcal{M}$ we can write 
$$m'\otimes \delta=\left(m r_ix_1^{r_1}...x_n^{r_n}/x_i \right)\otimes \delta,$$
for some $m\in \mathbf{V}^{D}_{a\cdot\mathbf{r}-\mathbf{1}^i}\mathcal{M}.$
Now using the relation 
$$\left(m\otimes \delta\right)\widetilde{\partial}_i=\left(m\widetilde{\partial}_{x_i}\right)\otimes \delta -\left(m r_ix_1^{r_1}...x_n^{r_n}/x_i \right)\otimes \delta \widetilde{\partial}_u,$$
we obtain
$$\left(m'\otimes \delta\right)\partial_y=\left(m\widetilde{\partial}_{x_i}\right)\otimes \delta-\left(m\otimes \delta\right)\widetilde{\partial}_i\in V^{H}_{a} i_+\mathcal{M},$$
which concludes the proof of the Condition (2).

For Condition (3), by direct computation, we have
$$
\left(m\otimes \delta\right)(y\widetilde{\partial}_y-az)=\left(m \frac{1}{r_i}\left(x_i\widetilde{\partial}_{x_i}-r_iaz\right)\otimes \delta\right)-\left(x_i\frac{1}{r_i}m\otimes \delta\right)\widetilde{\partial}_i.
$$
Using this relation and assuming $a\leq 0$, we can get that, if 
$$V^{D_i}_{r_ia} \mathcal{M}\left(x_i\widetilde{\partial}_{x_i}-r_iaz\right)^{\nu_{r_ia}}\subset V^{D_i}_{<r_ia} \mathcal{M},$$
we have that 
$$\left(\mathbf{V}^{D}_{a\cdot \mathbf{r}} \mathcal{M}\otimes \delta\right)\left(y\widetilde{\partial}_{y}-az\right)^{\nu_{r_ia}}\subset \left(\mathbf{V}^{D}_{a\cdot \mathbf{r}-\bar{\epsilon}^i} \mathcal{M}\otimes \delta\right)\cdot\widetilde{\mathscr{D}}_{\left(Y,H\right)},$$
where $\bar{\epsilon}^i:=[0,...,0,\epsilon,0,...,0],$ where $0<\epsilon\ll 1$ is located at the $i$-th position. Denote $\mu_a=\sum_i \nu_{r_ia}$, do the computation above inductively, we obtain 
$$\left(\mathbf{V}^{D}_{a\cdot \mathbf{r}} \mathcal{M}\otimes \delta\right)\left(y\widetilde{\partial}_{y}-az\right)^{\mu_{a}}\subset \left(\mathbf{V}^{D}_{<a\cdot \mathbf{r}} \mathcal{M}\otimes \delta\right)\cdot\widetilde{\mathscr{D}}_{\left(Y,H\right)}.$$
Hence $y\widetilde{\partial}_{y}-az$ is nilpotent on $\text{gr}^{V^{H}}_{a} i_+\mathcal{M}$, for $a\leq 0$.
When $a> 0$, we can get the same conclusion by induction and using the relation 
$$\widetilde{\partial}_y\left(y\widetilde{\partial}_y-az\right)=\left(y\widetilde{\partial}_y-\left(a-1\right)z\right)\widetilde{\partial}_y.$$
\end{proof}

\begin{lemma}\label{L:graph embedding no higher image}
Notations as above. Fix a strict $\widetilde{\mathscr{D}}_{\left(X,D\right)}$-module $\mathcal{N}$. Assume $\text{Gr}^F\mathcal{N}$ is torsion-free with respect to every $x_i$, then we have 
$$i_\# \mathcal{N}\simeq \mathcal{H}^0i_\# \mathcal{N}.$$
\end{lemma}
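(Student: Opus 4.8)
The plan is to reduce the statement to a Koszul-complex computation of the same flavor as Lemma \ref{L:vanishing by torsion freeness}, exploiting that the graph embedding $i$ is a closed immersion and that $i_\#$ is computed by the relative Spencer complex followed by $\mathbf{R}i_*$. Since $i$ is a closed immersion, $\mathbf{R}i_*$ is exact, so the higher cohomology of $i_\#\mathcal{N}$ can only come from the relative logarithmic Spencer complex $\textsl{Sp}_{(X,D)\to(Y,H)}(\mathcal{N}) = \mathcal{N}\otimes^{\mathbf{L}}_{\widetilde{\mathscr{D}}_{(X,D)}}\widetilde{\mathscr{D}}_{(X,D)\to(Y,H)}$, where $\widetilde{\mathscr{D}}_{(X,D)\to(Y,H)} = \widetilde{\mathscr{O}}_X\otimes_{i^{-1}\widetilde{\mathscr{O}}_Y}i^{-1}\widetilde{\mathscr{D}}_{(Y,H)}$. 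First I would use the canonical tensor-type Spencer resolution (\ref{E:M otimes spencer}) of $\mathcal{N}$ by $\widetilde{\mathscr{D}}_{(X,D)}$-modules whose terms are $\left(\mathcal{N}\otimes\widetilde{\mathscr{D}}_{(X,D)}\right)_{\text{tens}}\otimes\wedge^{j}\widetilde{\mathcal{T}}_{(X,D)}$, and since each such term is locally free over $\widetilde{\mathscr{D}}_{(X,D)}$ via the trivial structure, tensoring over $\widetilde{\mathscr{D}}_{(X,D)}$ with $\widetilde{\mathscr{D}}_{(X,D)\to(Y,H)}$ is exact on this resolution. Identifying $\left(\mathcal{N}\otimes\widetilde{\mathscr{D}}_{(X,D)}\right)_{\text{tens}}\otimes_{\widetilde{\mathscr{D}}_{(X,D)}}\widetilde{\mathscr{D}}_{(X,D)\to(Y,H)}$ with $\left(\mathcal{N}\otimes\widetilde{\mathscr{D}}_{(X,D)\to(Y,H)}\right)_{\text{tens}}$, this exhibits $i_\#\mathcal{N}$ (after $\mathbf{R}i_*=i_*$) as the complex $\textsl{Sp}_{(X,D)}\left(\mathcal{N}\otimes\widetilde{\mathscr{D}}_{(X,D)\to(Y,H)}\right)_{\text{tens}}$, exactly parallel to the move in Lemma \ref{L:vanishing by torsion freeness}.

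Next I would pass to $\text{Gr}^F$, which reduces the vanishing to showing that the associated graded complex has no higher cohomology. As in Lemma \ref{L:vanishing by torsion freeness}, this associated graded complex is the Koszul complex on the symbols of $x_1\partial_1,\dots,x_n\partial_n,\partial_{n+1},\dots,\partial_{d_X}\in\text{Gr}^F_1\widetilde{\mathscr{D}}_{(X,D)}$ acting on $\mathcal{P}:=\text{Gr}^F\left(\mathcal{N}\otimes\widetilde{\mathscr{D}}_{(X,D)\to(Y,H)}\right)_{\text{tens}}$. Writing $i_\#\mathcal{N}=\oplus_k\mathcal{N}\otimes\delta\widetilde{\partial}_u^k$ and using the explicit commutation relations recorded just before the lemma, one checks that $\mathcal{P}\simeq\text{Gr}^F\mathcal{N}\otimes_{\mathscr{O}_X}\mathscr{O}_X[\widetilde{\partial}_u]$ (or rather the polynomial extension in the new variable dual to $u=x_1^{r_1}\cdots x_n^{r_n}-y$), and that the symbols of $\partial_j$ for $j>n$ act as a regular sequence because $\mathcal{N}$ is itself resolved by $\textsl{Sp}_{(X,D)}$. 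The point is then to show that the symbols of $x_1\partial_1,\dots,x_n\partial_n$ continue this into a full regular sequence on the quotient $\overline{\mathcal{P}}:=\mathcal{P}/\mathcal{P}\cdot(\text{symbols of }\partial_{n+1},\dots,\partial_{d_X})\simeq \text{Gr}^F\mathcal{N}\otimes\mathscr{O}_X[\widetilde{\partial}_u]$; by the commutation relation $(m\otimes\delta)\widetilde{\partial}_i = m\widetilde{\partial}_{x_i}\otimes\delta - (mr_i x_1^{r_1}\cdots x_n^{r_n}/x_i)\otimes\delta\widetilde{\partial}_u$ one peels off one $x_i\partial_i$ at a time, each step reducing to the statement that multiplication by $x_i$ is injective on a suitable quotient of $\text{Gr}^F\mathcal{N}$, which is precisely the torsion-freeness hypothesis on $\text{Gr}^F\mathcal{N}$ with respect to every $x_i$.

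The main obstacle, and the step that needs care rather than routine bookkeeping, is the inductive peeling-off in the last paragraph: one must arrange the order of the generators so that after killing $\partial_{n+1},\dots,\partial_{d_X}$ and then $x_1\partial_1,\dots,x_{i-1}\partial_{i-1}$, the residual operator attached to $x_i\partial_i$ acts on the remaining quotient as honest multiplication by $x_i$ (up to a unit and a shift in the $\widetilde{\partial}_u$-degree), so that the torsion-freeness of $\text{Gr}^F\mathcal{N}$ in $x_i$ can be invoked — the cross terms involving $x_1^{r_1}\cdots x_n^{r_n}/x_i$ must be shown to be already accounted for in earlier quotients. Once the Koszul sequence is verified to be regular, the Koszul complex is a resolution, so $\text{Gr}^F i_\#\mathcal{N}$ is concentrated in degree $0$, hence $i_\#\mathcal{N}$ is strict and concentrated in degree $0$, giving $i_\#\mathcal{N}\simeq\mathcal{H}^0 i_\#\mathcal{N}$. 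I would also remark that strictness of $\mathcal{H}^0 i_\#\mathcal{N}$ follows for free from the $\text{Gr}^F$-computation, since no $z$-torsion is introduced.
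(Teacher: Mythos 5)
Your overall strategy (reduce to exactness of the relative Spencer complex, pass to $\text{Gr}^F$, and check a Koszul regular sequence) is a reasonable one, but it is not the paper's route, and the step you yourself flag as ``the main obstacle'' is a genuine gap rather than bookkeeping. The problem is in your description of the quotient $\overline{\mathcal{P}}$ and the residual relations. After killing the symbols of $\partial_{n+1},\dots,\partial_{d_X}$, the quotient of $\mathcal{P}=\text{Gr}^F\mathcal{N}\otimes_{\mathscr{O}_X}\mathscr{O}_X[\xi_1,\dots,\xi_{d_X},\eta]$ (with $\xi_i=\sigma(\partial_i)$, $\eta=\sigma(y\partial_y)$) is $\text{Gr}^F\mathcal{N}[\xi_1,\dots,\xi_n,\eta]$, a polynomial module in $n+1$ variables --- not $\text{Gr}^F\mathcal{N}\otimes\mathscr{O}_X[\widetilde{\partial}_u]$. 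The reason is that the symbol of $x_i\partial_{x_i}$ acts through the combination $x_i\xi_i+r_i\eta$ (since on the graph $x_i\partial_{x_i}$ lifts to $x_i\partial_i+r_iy\partial_y$), so these relations do not eliminate the variables $\xi_1,\dots,\xi_n$ one-to-one. One of them can be used to eliminate $\eta$ (unit coefficient $r_n$), but the remaining $n-1$ relations then have the shape $x_i\xi_i-(r_i/r_n)x_n\xi_n+(\text{lower order})$, each involving two of the $\xi$'s with non-unit coefficients. Showing these form a regular sequence requires, at the second step and beyond, that $x_i$ remain a nonzerodivisor on quotients such as $\text{Gr}^F\mathcal{N}[\xi]/\bigl(x_2\xi_2-ax_1\xi_1-b\bigr)$, and this does \emph{not} follow from the hypothesis that each $x_i$ is individually torsion-free on $\text{Gr}^F\mathcal{N}$ (e.g.\ for $\text{Gr}^F\mathcal{N}=\mathscr{O}_X/(x_2-x_3)$ each $x_i$ is a nonzerodivisor but $x_2$ kills $\text{Gr}^F\mathcal{N}/x_3\text{Gr}^F\mathcal{N}$). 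For $n=1$ the single residual relation has unit $\eta$-coefficient and everything is fine, which is consistent with flatness in the smooth-divisor case; the difficulty is genuinely an $n\geq 2$ phenomenon, and your sketch does not resolve it.

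The paper avoids this entirely by factoring $i$ as $j:(X,D)\to(Y,D'+H)$ followed by $id:(Y,D'+H)\to(Y,H)$, where $D'$ is cut out by the $x_i$ on $Y$. For $j$, the map $dj:j^*\Omega_Y(\log(D'+H))\to\Omega_X(\log D)$ is surjective, so $\widetilde{\mathscr{D}}_{(X,D)\to(Y,D'+H)}$ is a polynomial extension of $\widetilde{\mathscr{D}}_{(X,D)}$, hence flat, and $j_\#$ has no higher cohomology with no hypotheses on $\mathcal{N}$. The torsion-freeness is then used only in the second step, where the log poles along $D'_1,\dots,D'_n$ are removed one divisor at a time by Lemma \ref{L:vanishing by torsion freeness}; in that lemma every relation except one eliminates a free polynomial variable outright, so the final quotient really is of the form $\text{Gr}^F(\cdot)\otimes\mathscr{O}[\widetilde{\partial}_1]$ and the single remaining relation has leading coefficient exactly $x_1$. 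If you want to salvage a one-shot Koszul argument you would need to either strengthen the hypothesis (e.g.\ to the multi-variable strictness (\ref{E:strict for x_i}) available in the actual application) or supply a separate argument for the vanishing of the higher Koszul homology; as written, the peeling-off does not close.
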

\begin{proof}
Let $D'_i$ be the divisors on $Y$ that are defined by the the local functions $x_i$. Denote $D':=D'_1+...+D'_n$. We can decompose $i:\left(X,D\right)\to \left(Y,H\right)$ into $j:\left(X,D\right)\to \left(Y, D'+H\right)$ and $id: \left(Y, D'+H\right)\to \left(Y,H\right)$. By (\ref{E:direct image}), we have 
\begin{align*}
j_\# \mathcal{N}&\simeq j_*\left(\mathcal{N}\otimes^{\mathbf{L}}_ {\widetilde{\mathscr{D}}_{\left(X,D\right)}}\left(\widetilde{\mathscr{O}}_X\otimes_{f^{-1}\widetilde{\mathscr{O}}_Y}f^{-1}\widetilde{\mathscr{D}}_{\left(Y,D'+H\right)}\right) \right).
\end{align*}
However, it is straight forward to check that the natural map
$$dj:j^*\Omega_Y\left(\text{log }(D'+H)\right)\to \Omega_X\left(\text{log }D\right)$$
is surjective. Hence its dual
$$\partial j:\mathcal{T}_{\left(X,D\right)}\to j^*\mathcal{T}_{\left(Y,D'+H\right)}$$
has a locally free cokernal and we denote it by $\mathcal{K}$. Then we have 
$$\widetilde{\mathscr{O}}_X\otimes_{f^{-1}\widetilde{\mathscr{O}}_Y}f^{-1}\widetilde{\mathscr{D}}_{\left(Y,D'+H\right)}\simeq\widetilde{\mathscr{D}}_{\left(X,D\right)} \otimes \text{Sym}\left(\widetilde{\mathcal{K}}\cdot z\right).$$
In particular, we have that $\widetilde{\mathscr{O}}_X\otimes_{f^{-1}\widetilde{\mathscr{O}}_Y}f^{-1}\widetilde{\mathscr{D}}_{\left(Y,D'+H\right)}$ is locally free over $\widetilde{\mathscr{D}}_{\left(X,D\right)}$, hence we obtain
$$j_\# \mathcal{N}\simeq \mathcal{H}^0j_\# \mathcal{N}.$$ 
For $id: \left(Y, D'+H\right)\to Y$ part, by (\ref{E:direct image}), we have 
\begin{align*}
id_\# \left(j_\#\mathcal{N}\right)&\simeq \left(j_\#\mathcal{N}\right)\otimes^{\mathbf{L}}_ {\widetilde{\mathscr{D}}_{\left(Y,D'+H\right)}}\widetilde{\mathscr{D}}_{\left(Y,H\right)}.
\end{align*}
However, by the assumption that $\text{Gr}^F\mathcal{N}$ is torsion-free with respect to $x_i$, hence so is $\text{Gr}^F j_\#\mathcal{N}$. By applying Lemma \ref{L:vanishing by torsion freeness} inductively, we obtain
$$id_\# \left(j_\#\mathcal{N}\right)\simeq \mathcal{H}^0 id_\# \left(j_\#\mathcal{N}\right).$$
\end{proof}

\begin{lemma}\label{L:graph embedding case}
Notations as in Lemma \ref{L:induced KM-filtration}. We have
\begin{align*}
i_{\#}\mathbf{V}^D_{a\cdot \mathbf{r}} \mathcal{M}\simeq V^H_ai_+\mathcal{M} \text{ for } a<0.
\end{align*}
Further, if the morphism 
\begin{equation*}
x_i:V^{D_i}_0\mathcal{M}\to V^{D_i}_{-1}\mathcal{M}
\end{equation*}
is an isomorphism for every component $D_i$, e.g. $\mathcal{M}=\mathcal{M}[*D]$, the identity still holds when $a=0$
\end{lemma}
\begin{proof}
Since $\text{Gr}^F\mathbf{V}^D_{a\cdot \mathbf{r}} \mathcal{M}$ is $x_i$ torsion free for $a<0$ (resp. $a\leq 0$ with the extra assumption,) due to the strictness condition (\ref{E:strict for x_i}), applying the previous lemma, we get 
$$i_{\#}\mathbf{V}^D_{a\cdot \mathbf{r}} \mathcal{M}\simeq \mathcal{H}^0 i_{\#}\mathbf{V}^D_{a\cdot \mathbf{r}} \mathcal{M},$$
for $a<0$ (resp. $a \leq 0$).

We have 
$$\mathcal{H}^0 i_{\#}\mathbf{V}^D_{a\cdot \mathbf{r}} \mathcal{M} \simeq i_* \left(\mathbf{V}^D_{a\cdot \mathbf{r}} \mathcal{M} \otimes_{\widetilde{\mathscr{D}}_{\left(X,D\right)}}\left(\widetilde{\mathscr{O}}_X\otimes_{i^{-1}\widetilde{\mathscr{O}}_Y}i^{-1}\widetilde{\mathscr{D}}_{\left(Y,H\right)}\right)\right),$$
and 
$$i_+\mathcal{M}\simeq i_* \left(\mathcal{M} \otimes_{\widetilde{\mathscr{D}}_{X}}\left(\widetilde{\mathscr{O}}_X\otimes_{i^{-1}\widetilde{\mathscr{D}}_Y}\widetilde{\mathscr{D}}_{Y}\right)\right).$$
There is a natural morphism 
$$
i_* \left(\mathbf{V}^D_{a\cdot \mathbf{r}} \mathcal{M} \otimes_{\widetilde{\mathscr{D}}_{\left(X,D\right)}}\left(\widetilde{\mathscr{O}}_X\otimes_{i^{-1}\widetilde{\mathscr{D}}_Y}\widetilde{\mathscr{D}}_{\left(Y,H\right)}\right)\right)\to 
i_* \left(\mathcal{M} \otimes_{\widetilde{\mathscr{D}}_{X}}\left(\widetilde{\mathscr{O}}_X\otimes_{i^{-1}\widetilde{\mathscr{D}}_Y}\widetilde{\mathscr{D}}_{Y}\right)\right),$$
which is an injection due the torsion free condition on $\text{Gr}^F\mathbf{V}^D_{a\cdot \mathbf{r}} \mathcal{M}$, and its image is exactly
$$\left(\mathbf{V}^D_{a\cdot \mathbf{r}} \mathcal{M}\otimes \delta\right)\cdot\widetilde{\mathscr{D}}_{\left(Y,H\right)}.$$
We can conclude the proof, by using Lemma \ref{L:induced KM-filtration}.
\end{proof}

Fix a projective morphism between two smooth varieties $f: X\to Y$. Given $D^X, D^Y$ two reduced divisors with normal crossing support on $X, Y$, respectively, recall that we say $f:\left(X,D^X\right)\to \left(Y, D^Y\right)$ is a projective morphism of log-smooth pairs if we further have that $f^{-1}D^Y\subset D^X$.

\begin{prop}
Notations as above, assume that $f^{-1}D^Y= D^X$. Fix a mixed Hodge module $\mathcal{M}$ on $X$. Then we have 
\begin{align*}
\mathcal{H}^if_+ \mathcal{M}\left[*D^X\right]=&\left(\mathcal{H}^if_+ \mathcal{M}\right)[*D^Y],\\
\mathcal{H}^if_+ \mathcal{M}\left[!D^X\right]=&\left(\mathcal{H}^if_+ \mathcal{M}\right)[!D^Y].
\end{align*}
\end{prop}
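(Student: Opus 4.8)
The plan is to reduce the statement to the case of a graph embedding combined with a projective morphism between log-smooth pairs, and then invoke the compatibility of the pushforward functor with localization. Write $D^X = \mathbf{r}\cdot D_i = r_1 D_1 + \dots + r_n D_n$ for the decomposition into reduced irreducible components, and similarly factor the morphism through the graph of $y = x_1^{r_1}\cdots x_n^{r_n}$ locally. Since localization $[\ast D]$ commutes with smooth pullback and since $f^{-1}D^Y = D^X$, the key point is that the localization $\mathcal{M}[\ast D^X]$ is, by definition, $i_\ast i^{-1}\mathcal{M}$ for $i:X\setminus D^X \hookrightarrow X$, and correspondingly $(\mathcal{H}^i f_+\mathcal{M})[\ast D^Y] = j_\ast j^{-1}\mathcal{H}^i f_+\mathcal{M}$ for $j:Y\setminus D^Y\hookrightarrow Y$. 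Because $f$ restricts to a morphism $f^\circ: X\setminus D^X\to Y\setminus D^Y$, base change for the open embedding gives $j^{-1}\mathcal{H}^i f_+\mathcal{M} = \mathcal{H}^i f^\circ_+ (i^{-1}\mathcal{M})$.

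First I would recall that in Saito's theory the localization functor $[\ast D]$ on mixed Hodge modules commutes with the direct image under a projective morphism, i.e. $(\mathcal{H}^i f_+ \mathcal{M})[\ast D^Y] = \mathcal{H}^i f_+ (\mathcal{M}[\ast D^X])$, provided $D^X = f^{-1}D^Y$; this is precisely the statement that the adjunction triangle for $j_\ast j^{-1}$ on $Y$ pulls back compatibly along $f$. Concretely, apply $f_+$ to the exact triangle $\mathcal{M}[!D^X]\to \mathcal{M} \to \mathcal{M}[\ast D^X] \xrightarrow{+1}$ on $X$. The functor $f_+$ is an exact functor of triangulated categories, so one obtains a triangle $f_+(\mathcal{M}[!D^X]) \to f_+\mathcal{M} \to f_+(\mathcal{M}[\ast D^X])\xrightarrow{+1}$ in $D^b\mathrm{MHM}(Y)$. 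One then identifies $f_+(\mathcal{M}[\ast D^X])$ with the localization of $f_+\mathcal{M}$ along $D^Y$: this follows because both are supported in the correct way off $D^Y$ (they agree with $f_+\mathcal{M}$ over $Y\setminus D^Y$ by base change) and $f_+(\mathcal{M}[\ast D^X])$ has no sub- or quotient object supported on $D^Y$, by the characterization of $[\ast D^Y]$ via the vanishing of $\mathrm{Hom}$ from/to objects supported on $D^Y$. Taking the long exact sequence of cohomologies of this triangle and using that $\mathcal{H}^i$ commutes with the localization functor (localization is exact on the level of perverse cohomology) yields the claimed identity $\mathcal{H}^i f_+\mathcal{M}[\ast D^X] = (\mathcal{H}^i f_+\mathcal{M})[\ast D^Y]$. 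The $[!D]$ case is dual: either run the same argument with $[!D^X]$, or apply $\mathbf{D}_X$ and use that $\mathbf{D}\circ f_+ = f_+ \circ \mathbf{D}$ for projective $f$ together with $\mathbf{D}(\mathcal{M}[\ast D]) = (\mathbf{D}\mathcal{M})[!D]$.

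The main obstacle I expect is the bookkeeping needed to legitimately exchange $\mathcal{H}^i$ with the localization functor, i.e. to pass from the statement about $f_+$ in the derived category to the statement about individual cohomology modules $\mathcal{H}^i f_+$. This requires knowing that $[\ast D^Y]: D^b\mathrm{MHM}(Y)\to D^b\mathrm{MHM}(Y)$ is $t$-exact for the perverse $t$-structure, equivalently that $[\ast D^Y] \mathcal{H}^i K = \mathcal{H}^i ([\ast D^Y] K)$ for any $K$; this is standard (localization along a divisor is an exact functor on $\mathrm{MHM}$, since $j_\ast$ for the open embedding $j$ is exact on perverse sheaves when $j$ is an affine open embedding, which holds because the complement is a divisor), but one must cite it carefully — e.g. \cite[Chapter 9]{SS16} or the relevant part of \cite{Sa90}. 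A secondary subtlety is that the identification $f_+(\mathcal{M}[\ast D^X]) \simeq (f_+\mathcal{M})[\ast D^Y]$ genuinely uses $f^{-1}D^Y = D^X$ (not merely $f^{-1}D^Y\subset D^X$): this guarantees that $f$ maps $X\setminus D^X$ into $Y\setminus D^Y$ and that the base-change square over the open set $Y\setminus D^Y$ is Cartesian, which is exactly what makes $f_+$ commute with the two open-embedding pushforwards. Once these two points are in place the rest is formal.
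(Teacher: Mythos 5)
Your argument is correct and is essentially the paper's: both proofs rest on the commutative square $f\circ i_X=i_Y\circ f'$ over the Cartesian open locus $Y\setminus D^Y$, the properness of $f$ and $f'$, and the affineness of the open immersions (hence exactness of $j_*$, $j_!$ on perverse objects) to commute $\mathcal{H}^i$ with localization. The only differences are cosmetic: the paper invokes \cite[2.11.~Proposition]{Sa90} to reduce immediately to the level of underlying perverse sheaves, where the identities are just $\mathbf{R}f_*\mathbf{R}i_{X*}=\mathbf{R}i_{Y*}\mathbf{R}f'_*$ and its $!$-analogue, whereas you argue in $D^b\mathrm{MHM}(Y)$ via the Hom-vanishing characterization of $[*D^Y]$; also, your opening reduction to a graph embedding is never used and can be deleted.
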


\begin{proof}
By \cite[2.11. Proposition]{Sa90}, we only need to check the identities above at the level of perverse sheaves, which is the case by considering the following commutative diagram:
\begin{center}
\begin{tikzcd}
X\setminus D^X\arrow[r,hook]{r}{i_X}\arrow{d}{f'}
&X\arrow{d}{f}\\
Y\setminus D^Y\arrow[r,hook]{r}{i_Y}
&Y.
\end{tikzcd}
\end{center}
Noticing that $i_X$ and $i_Y$ are affine and $f$ and $f'$ are projective, we have 
\begin{align*}
\mathbf{R}f_*\mathbf{R}i_{X*}K=&\mathbf{R}i_{Y*}\mathbf{R}f'_*K,\\
\mathbf{R}f_*\mathbf{R}i_{X!}K=&\mathbf{R}i_{Y!}\mathbf{R}f'_*K,
\end{align*}
for any perverse sheaf $K$ on $X\setminus D^X$.
\end{proof}

\begin{proof}[Proof of Theorem \ref{T:direct image}]
We only show the proof of the first identity here. The second one follows similarly. 
Consider the morphism of log pairs, induced by the identity map 
$$id: \left(X, D^X\right)\to \left(X, f^{-1}D^Y\right),$$
and its induced direct image of $V^{D^X}_{\mathbf{0}} \mathcal{M}[*D^X]$. We can forget the logarithmic structure on those components of $D^X-f^{-1}D^Y$. More precisely, we have 
\begin{align*}
id_\#V^{D^X}_{\mathbf{0}} \mathcal{M}[*D^X] \simeq& V^{D^X}_{\mathbf{0}} \mathcal{M}[*D^X]\otimes^\mathbf{L}_{\widetilde{\mathscr{D}}_{\left(X,D^X\right)}}\widetilde{\mathscr{D}}_{\left(X,f^{-1}D^Y\right)}\\
\simeq& V^{f^{-1}D^Y}_{\mathbf{0}}\mathcal{M}[*D^X].
\end{align*}
The second identity is due to the comparison theorem. Hence, without loss of generality, we can further assume that $D^X=f^{-1} D^Y$.

Work locally over $Y$, and consider the following commutative diagram
\begin{center}
\begin{tikzcd}
\left(X,D^X\right)\arrow{r}{i}\arrow{d}{f}&
\left(S,H^S\right)\arrow{d}{g}\\
\left(Y,D^Y\right)\arrow{r}{j}&
\left(T,H^T\right),
\end{tikzcd}
\end{center}
where 
$$i:\left(X,D^X\right)\to \left(S,H^S\right)$$
is the graph embedding given by a local function $f^*y$, where $y$ is a local function on $Y$ that defines $D^Y$, and 
$$j:\left(Y,D^Y\right)\to \left(T,H^T\right),$$
 is the graph embedding given by $y$. Let
$$g:\left(S,H^S\right)\to \left(T,H^T\right)$$
be the naturally induced morphism, and we have $g^*H^T=H^S$.
By Lemma \ref{L:graph embedding case}, we have 
$$i_\# \mathbf{V}^{D^X}_{\mathbf{0}} \left(\mathcal{M}\left[*D^X\right]\right)\simeq V^{H^S}_0 i_+\left(\mathcal{M}[*H^S]\right).$$
Now by \cite[Theorem 7.8.5]{SS16}, \cite[3.3.17]{Sa88} we have 
$$\mathcal{H}^ig_\#V^{H^S}_0 i_+\left(\mathcal{M}\left[*H^S\right]\right)\simeq V^{H^T}_0 \left((\mathcal{H}^i \left(g\circ i\right)_+\mathcal{M})\left[*H^T\right]\right).$$
Hence we obtain
\begin{equation}\label{E:one side push}
\left(g\circ i\right)_\# \mathbf{V}^{D^X}_{\mathbf{0}}\left( \mathcal{M}\left[*D^X\right]\right)\simeq V^{H^T}_0 \left((\mathcal{H}^i \left(g\circ i\right)_+\mathcal{M})\left[*H^T\right]\right).
\end{equation}

Note that, since both $D^Y$ and $H^T$ are smooth and $j^*H^T=D^Y$, 
by a local computation, we have that
$$dj:j^*\Omega_T\left(log D^T\right)\to \Omega_Y\left(log D^Y\right)$$
is surjective. Hence, as in Lemma \ref{L:graph embedding no higher image}, we have that $\widetilde{\mathscr{D}}_{\left(Y,D^Y\right)\to \left(T,H^T\right)}$ is a flat left $\widetilde{\mathscr{D}}_{\left(Y,D^Y\right)}$-module. Hence 
$$j_\#\mathcal{H}^i f_\# \left(\mathbf{V}^{D^X}_{\mathbf{0}} \mathcal{M}\left[*D^X\right]\right)\simeq \mathcal{H}^0j_\#\mathcal{H}^i f_\# \left(\mathbf{V}^{D^X}_{\mathbf{0}} \mathcal{M}\left[*D^X\right]\right).$$
By the degeneration of Leray spectral sequence and (\ref{E:one side push}), we obtain
$$j_\#\mathcal{H}^i f_\# \left(\mathbf{V}^{D^X}_{\mathbf{0}} \mathcal{M}\left[*D^X\right]\right)\simeq V^{H^T}_0 \left(\left(\mathcal{H}^i \left(j\circ f\right)_+\mathcal{M}\right)\left[*H^T\right]\right).$$
Apply the previous lemma again, we have
$$j_\#\mathcal{H}^i f_\# \left(\mathbf{V}^{D^X}_{\mathbf{0}} \mathcal{M}\left[*D^X\right]\right)\simeq j_\#\left(V^{D^Y}_0 \left(\mathcal{H}^i f_+\mathcal{M}\right)\left[*D^Y\right]\right).$$

Since $D^Y$ is smooth, by changing of coordinates, locally we have 
$$\widetilde{\mathscr{D}}_{\left(Y,D^Y\right)\to \left(T,H^T\right)}\simeq \widetilde{\mathscr{D}}_{\left(Y,D^Y\right)}\otimes \widetilde{\mathscr{O}}_Y[\widetilde{\partial}_u].,$$
which implies that $j_\#$ is a fully faithful functor. Hence we can conclude that 
$$\mathcal{H}^i f_\# \left(\mathbf{V}^{D^X}_{\mathbf{0}} \mathcal{M}\left[*D^X\right]\right)\simeq V^{D^Y}_0\left( \left(\mathcal{H}^i f_+\mathcal{M}\right)\left[*D^Y\right]\right).$$ 
In particular, it is a strict $\widetilde{\mathscr{D}}_{\left(Y,D^Y\right)}$-module.
\end{proof}
\begin{remark*}
$D^Y$ being smooth is important in Theorem \ref{T:direct image}. If $D^Y$ is not smooth, Theorem \ref{T:direct image} will not be correct for $f$ in general, since $\mathcal{H}^if_+\mathcal{M}$ may not admit V-compatible Kashiwara-Malgrange filtrations with respect to $D^Y$. Even with this extra assumption, it might still not be enough, since we now need to consider several Kashiwara-Malgrange filtrations, with respect to each component of $D^Y$, induced on the complex $f_+\mathcal{M}$. Then we need to consider the strictness of the multi-filtered complex as in \cite[8.3]{SS16}. The previous proof only shows that each of the Kashiwara-Malgrange filtrations is strict on the complex, but their intersections may not. 
\end{remark*}
However, in the case of admissible variation of mixed Hodge structures, we have the following.
\begin{coro}\label{C:direct VMHS}
Notations as in Theorem \ref{T:direct image}, instead of assuming that $D^Y$ is smooth, we assume that $\mathcal{H}^if_+(\mathcal{M}[*D^X])$ (resp. $\mathcal{H}^if_+(\mathcal{M}[!D^X])$) is an admissible variation of mixed Hodge structures restricted on $Y\setminus D^Y$. Then we have 
$$
\left(\mathcal{H}^i f_\#  \mathcal{M}\left[*\log D^X\right]\right)^{\vee\vee}= \left(\mathcal{H}^i f_+\mathcal{M}\right)[*\log D^Y].
$$
$$\left(\text{Resp. } \left(\mathcal{H}^if_\#  \mathcal{M}\left[!\log D^X\right]\right)^{\vee\vee}= \left(\mathcal{H}^i f_+\mathcal{M}\right)[!\log D^Y].\right)
$$
\end{coro}
\begin{proof}
Due to the extra assumption, we have that $\mathbf{V}^{D^Y}_\mathbf{0} \left(\mathcal{H}^i f_+\mathcal{M}\right)[*D^Y]$ is locally free, hence reflexive. Now we only need to show the identity out of a closed codimension $2$ subset on $Y$. This is implied by the previous theorem.
\end{proof}
Consider the associated graded complex, by Proposition \ref{P:laumon}, we obtain the following, comparing to \cite[Theorem 2.4]{PS13}.
\begin{coro}\label{C:direct image gr}
Notations as in Theorem \ref{T:direct image} and Proposition \ref{P:laumon}, we have
\begin{align*}
\mathcal{H}^i f_{\widetilde{\#}}\left(\text{Gr}^F  \mathcal{M}\left[*\log D^X\right]\right)&\simeq\text{Gr}^F  \left(\mathcal{H}^i f_+\mathcal{M}\right)[*\log D^Y],\\
\mathcal{H}^i f_{\widetilde{\#}}\left(\text{Gr}^F  \mathcal{M}\left[!\log D^X\right]\right)&\simeq\text{Gr}^F   \left(\mathcal{H}^i f_+\mathcal{M}\right)[!\log D^Y].
\end{align*}
\end{coro}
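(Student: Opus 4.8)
The plan is to combine Theorem~\ref{T:direct image} with Laumon's formula (Proposition~\ref{P:laumon}): the former computes $\mathcal{H}^i f_\#$ of the two log-modules in question \emph{and} records the strictness of their pushforwards, while the latter converts exactly such a strictness statement into a termwise computation of $\text{Gr}^F$ along the log-cotangent bundle. Set
\[
\mathcal{N}_* := \mathbf{V}^{D^X}_{\mathbf{0}}\mathcal{M}[*D^X], \qquad \mathcal{N}_! := \mathbf{V}^{D^X}_{<\mathbf{0}}\mathcal{M}[!D^X],
\]
the same coherent strict $\widetilde{\mathscr{D}}_{(X,D^X)}$-modules that already appear in Theorem~\ref{T:direct image} (for $\mathcal{N}_!$ note in passing that, the $V$-filtrations being discretely indexed, $\mathbf{V}^{D^X}_{<\mathbf{0}}$ coincides with $\mathbf{V}^{D^X}_{\mathbf{a}}$ for a suitable $\mathbf{a}<\mathbf{0}$). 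I will only spell out the argument for $\mathcal{N}_*$; the case of $\mathcal{N}_!$ is verbatim the same, using the second line of Theorem~\ref{T:direct image} in place of the first.

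First I would check that $\mathcal{N}_*$ satisfies the two hypotheses of Proposition~\ref{P:laumon}. It is strict: $\mathcal{M}[*D^X]$ is a Mixed Hodge Module, hence a strict coherent $\widetilde{\mathscr{D}}_X$-module, i.e.\ has no $z$-torsion, and $\mathcal{N}_*$ is a $\widetilde{\mathscr{D}}_{(X,D^X)}$-submodule of it, so it has no $z$-torsion either. And $f_\#\mathcal{N}_*$ is strict: this is precisely the ``in particular'' assertion of Theorem~\ref{T:direct image}. Since $f$ is projective, Proposition~\ref{P:laumon} applies to $f\colon(X,D^X)\to(Y,D^Y)$ and $\mathcal{N}_*$, yielding
\[
\mathcal{H}^i f_{\widetilde{\#}}\left(\text{Gr}^F\mathcal{N}_*\right)\;\simeq\;\text{Gr}^F\,\mathcal{H}^i f_\#\mathcal{N}_*.
\]

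On the other hand, the first line of Theorem~\ref{T:direct image} identifies $\mathcal{H}^i f_\#\mathcal{N}_* = V^{D^Y}_{0}\left(\mathcal{H}^i f_+\mathcal{M}\right)[*D^Y]$. Substituting this into the display above gives
\[
\mathcal{H}^i f_{\widetilde{\#}}\left(\text{Gr}^F\,\mathbf{V}^{D^X}_{\mathbf{0}}\mathcal{M}[*D^X]\right)\;\simeq\;\text{Gr}^F\, V^{D^Y}_{0}\left(\mathcal{H}^i f_+\mathcal{M}\right)[*D^Y],
\]
which is the first isomorphism of the corollary; the second follows in the same way from $\mathcal{N}_!$ and the second line of Theorem~\ref{T:direct image}.

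Strictly speaking the corollary carries no obstacle of its own: all the content sits in its two inputs --- the $\widetilde{\mathscr{D}}$-module identity together with the strictness of $f_\#$ from Theorem~\ref{T:direct image}, and the identification of $\text{Gr}^F$ of a strict pushforward with a relative $\text{Tor}$ over $\mathcal{A}_{(X,D^X)}$ from Proposition~\ref{P:laumon}. The only point that deserves a moment's care is that the strictness proved in Theorem~\ref{T:direct image} is \emph{exactly} the hypothesis that Laumon's formula consumes; this is precisely why the strictness statement was carried along in the theorem rather than discarded.
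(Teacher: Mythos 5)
Your proposal is correct and follows exactly the paper's own route: the corollary is obtained by feeding the identification and the strictness assertion of Theorem \ref{T:direct image} into Laumon's formula (Proposition \ref{P:laumon}). The extra verifications you spell out (strictness of the $\mathbf{V}$-filtered pieces as submodules of a strict module, and the discreteness remark for $\mathbf{V}^{D^X}_{<\mathbf{0}}$) are implicit in the paper but harmless and accurate.
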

\section{duality functor}

\begin{proof}[Proof of Theorem \ref{T:dual}]
We only prove the first equation here.

Apply the resolution in Proposition \ref{P:free resolution NC} on $\mathcal{M}[*D]$, locally we have 
$$\left(\mathcal{M}^\bullet, \mathbf{V}_\bullet^D\right)\to \left(\mathcal{M}[*D], \mathbf{V}_\bullet^D\right),$$
where each $\left(\mathcal{M}^i, \mathbf{V}^D_\bullet \right)$ is a direct sum of finite copies of $\left(\widetilde{\mathscr{D}}_X\cdot z^p , \mathbf{V}^D[\mathbf{a}]\right)$, with $-\mathbf1< \mathbf{a}\leq \mathbf0, p\in\mathbb{Z}$.
Now we only need to show that 
\begin{equation}\label{E:dual compare}
\mathbf{R}\mathcal{H}om_{\widetilde{\mathscr{D}}_{(X,D)}}\left(\mathbf{V}^D_{\mathbf{0}}\left(\mathcal{M}^\bullet, \mathbf{V}^D_\bullet\right), \widetilde{\omega}_X[d_X]\otimes \widetilde{\mathscr{D}}_{\left(X,D\right)}\right)=\mathbf{V}^D_{<\mathbf{0}}\mathcal{M}'.
\end{equation}

However, for $-\mathbf{1}< \mathbf{a}\leq \mathbf0$, we have 
$$\mathbf{V}^D_\mathbf{0}\left(\widetilde{\mathscr{D}}_X \cdot z^p, \mathbf{V}^D[\mathbf{a}]\right)=\widetilde{\mathscr{D}}_{\left(X,D\right)}\cdot z^p.$$
Hence
$$
\mathbf{R}\mathcal{H}om_{\widetilde{\mathscr{D}}_{(X,H)}}\left(\mathbf{V}^D_\mathbf{0}\left(\widetilde{\mathscr{D}}_X\cdot z^p , \mathbf{V}^D[\mathbf{a}]\right), \widetilde{\omega}_X[d_X]\otimes \widetilde{\mathscr{D}}_{(X,D)}\right)\simeq\widetilde{\omega}_X[d_X]\otimes  \widetilde{\mathscr{D}}_{(X,D)}\cdot z^{-p}.
$$
On the other hand, to get the Kashiwara-Malgrange filtration $\mathbf{V}^D_\bullet$ on $\mathcal{M}'$, we set 
$$
\mathcal{H}om_{\widetilde{\mathscr{D}}_X} \left(\left(\widetilde{\mathscr{D}}_X\cdot z^p , \mathbf{V}^D[\mathbf{a}]\right), \widetilde{\omega}_X[d_X]\otimes \widetilde{\mathscr{D}}_X \right)=\left(\widetilde{\omega}_X[d_X]\otimes\widetilde{\mathscr{D}}_X\cdot z^{-p}, \mathbf{V}^D[-\mathbf1-\mathbf{a}]\right),
$$
which gives a filtration on the complex 
$$\mathbf{R}\mathcal{H}om_{\widetilde{\mathscr{D}}_X} \left(\left(\mathcal{M}^\bullet, \mathbf{V}^D_\bullet\right), \widetilde{\omega}_X[d_X]\otimes \widetilde{\mathscr{D}}_X \right).$$
This filtration is actually strict and induces the Kashiwara-Malgrange filtration $\mathbf{V}^D_\bullet$ on $\mathcal{M}'$, as shown in the proof of \cite[5.1.13. Lemme.]{Sa88}. Note that Saito only showed the pure Hodge module case there, but the proof still works in the mixed Hodge module case, by induction on the weights.

Hence, for $-\mathbf{1}< \mathbf{a}\leq \mathbf0$, we have
\begin{align*}
&\mathbf{V}^D_{<\mathbf0}\mathbf{R}\mathcal{H}om_{\widetilde{\mathscr{D}}_X}\left(\left(\widetilde{\mathscr{D}}_X\cdot z^p , \mathbf{V}^D[\mathbf{a}]\right), \widetilde{\omega}_X[d_X]\otimes \widetilde{\mathscr{D}}_X \right)\\
\simeq&\mathbf{V}^D_{<\mathbf0}\left(\widetilde{\omega}_X[d_X]\otimes\widetilde{\mathscr{D}}_X\cdot z^{-p}, \mathbf{V}^D[-\mathbf1-\mathbf{a}]\right)\\
=&\widetilde{\omega}_X[d_X]\otimes \widetilde{\mathscr{D}}_{(X,D)}\cdot z^{-p}.
\end{align*}
Now (\ref{E:dual compare}) is clear.

Recall that we use the trivial $\widetilde{\mathscr{D}}$-module structure on $\widetilde{\omega}_X\otimes \widetilde{\mathscr{D}}_X $ to induce the $\widetilde{\mathscr{D}}$-module structure on $\mathcal{M}'$.
To show that $\mathcal{M}'$ also admits a V-compatible multi-indexed Kashiwara-Malgrange filtration with respect to $D$, we only need to check those conditions on $\left(\widetilde{\mathscr{D}}_X, \mathbf{V}^D[-\mathbf1-\mathbf{a}]\right)$, for $-\mathbf{1}< \mathbf{a}\leq \mathbf0$, and it is clear. 
\end{proof}
  
Pass to the associated graded pieces, by using  Proposition \ref{P:dual laumon}, we obtain the following, comparing to \cite[Theorem 2.3]{PS13}.
\begin{coro}\label{C:dual gr}
Notations as in Theorem \ref{T:dual}, we have
\begin{align*}
\text{Gr}^F \mathcal{M}'[*\log D]\simeq&\mathbf{R}\mathcal{H}om_{\mathcal{A}_{\left(X,D\right)}}\left(\text{Gr}^F \mathcal{M}[!\log D], \omega_X[d_X]\otimes_{\mathscr{O}_X}\mathcal{A}_{\left(X,D\right)}\right),\\
\text{Gr}^F  \mathcal{M}'[!\log D]\simeq&\mathbf{R}\mathcal{H}om_{\mathcal{A}_{\left(X,D\right)}}\left(\text{Gr}^F \mathcal{M}[*\log D], \omega_X[d_X]\otimes_{\mathscr{O}_X}\mathcal{A}_{\left(X,D\right)}\right).
\end{align*}
If we consider both sides as graded $\mathscr{O}_{T^*_{\left(X,D \right)}}$-complexes on $T^*_{\left(X,D \right)}$, we get
\begin{align*}
	 \mathcal{G}\left( \mathcal{M}'[*\log D]\right)\simeq& \left(-1\right)^*_{T^*_{\left(X,D\right)}}
\mathbf{R}\mathcal{H}om_{\mathscr{O}_{T^*_{\left(X,D\right)}}}
\left(\mathcal{G}\left( \mathcal{M}[!\log D]\right), p_X^*\omega_X[d_X] \otimes \mathscr{O}_{T^*_{\left(X,D\right)}}\right),\\
\mathcal{G}\left( \mathcal{M}'[!\log D]\right)\simeq& \left(-1\right)^*_{T^*_{\left(X,D\right)}}
\mathbf{R}\mathcal{H}om_{\mathscr{O}_{T^*_{\left(X,D\right)}}}
\left(\mathcal{G}\left(\mathcal{M}[*\log D]\right), p_X^*\omega_X[d_X] \otimes \mathscr{O}_{T^*_{\left(X,D\right)}}\right).
\end{align*}
\end{coro}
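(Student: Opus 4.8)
The plan is to obtain this as a direct consequence of Theorem \ref{T:dual smooth boundary} combined with the dual Laumon formula of Proposition \ref{P:dual laumon}; no new analytic input is needed. The key observation is that all four objects appearing in Theorem \ref{T:dual smooth boundary} are honest modules — not merely complexes — lying in $MG_{\text{coh}}\left(\widetilde{\mathscr{D}}_{(X,H)}\right)$, and are strict, so Proposition \ref{P:dual laumon} applies to them verbatim with $D=H$.

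In detail, I would take $\mathcal{N}=V^{H}_{<0}\mathcal{M}[!H]$ (resp. $\mathcal{N}=V^{H}_{0}\mathcal{M}[*H]$). Since $\mathcal{M}[!H]$ (resp. $\mathcal{M}[*H]$) is a Mixed Hodge Module, it is a strict coherent $\widetilde{\mathscr{D}}_X$-module equipped with its rational Kashiwara-Malgrange filtration along $H$; by Definition \ref{D:KM-filtration on H}(1) each $V$-piece is a coherent $\widetilde{\mathscr{D}}_{(X,H)}$-module, and it is strict since it is a $\widetilde{\mathscr{D}}_{(X,H)}$-submodule of a strict module, hence has no $\mathbb{C}[z]$-torsion. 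Thus $\mathcal{N}$ is a strict object of $MG_{\text{coh}}\left(\widetilde{\mathscr{D}}_{(X,H)}\right)$. By Theorem \ref{T:dual smooth boundary}, $\mathbf{D}_{(X,H)}\mathcal{N}\simeq V^{H}_{0}\mathcal{M}'[*H]$ (resp. $\simeq V^{H}_{<0}\mathcal{M}'[!H]$), which is again a single $V$-piece of a Mixed Hodge Module, hence a strict object of $MG\left(\widetilde{\mathscr{D}}_{(X,H)}\right)$ concentrated in cohomological degree $0$. So the hypotheses of Proposition \ref{P:dual laumon} — strictness of $\mathcal{N}$ and of $\mathbf{D}_{(X,H)}\mathcal{N}$, the latter living only in degree $0$ — are satisfied.

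Applying Proposition \ref{P:dual laumon} to this $\mathcal{N}$ and substituting the identification of $\mathbf{D}_{(X,H)}\mathcal{N}$ provided by Theorem \ref{T:dual smooth boundary} will yield precisely the first (resp. second) displayed isomorphism of the corollary. The passage to the $\mathscr{O}_{T^*_{(X,H)}}$-complex formulation is then the reinterpretation already recorded in Proposition \ref{P:dual laumon}: under $T^*_{(X,H)}=\text{Spec}\left(\mathcal{A}_{(X,H)}\right)$ the $\mathbf{R}\mathcal{H}om_{\mathcal{A}_{(X,H)}}$ becomes $\mathbf{R}\mathcal{H}om_{\mathscr{O}_{T^*_{(X,H)}}}$, and the extra factor $(-1)^k$ on $\textsl{Sym}^k\mathcal{T}_{(X,H)}$ coming from the induced tensor log-$\widetilde{\mathscr{D}}$-module structure is exactly pullback $\left(-1\right)^*_{T^*_{(X,H)}}$ along the fibrewise antipode, producing the twist in the last two displays. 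I do not expect a substantial obstacle here; the closest thing is the bookkeeping — one must make sure the Kashiwara-Malgrange filtration $V^{H}_\bullet$ on $\mathcal{M}'$ used implicitly is the one governing Theorem \ref{T:dual smooth boundary} (it is, via strictness of the dualizing resolution there, following \cite[5.1.13]{Sa88}), and that the antipodal twist $\left(-1\right)^*_{T^*_{(X,H)}}$ is applied consistently on both sides — since the genuinely hard point, the strictness of the dual of the log-$V$-filtered pieces, is exactly what Theorem \ref{T:dual smooth boundary} supplies.
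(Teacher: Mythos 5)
Your proposal is correct and follows exactly the paper's own (one-line) proof: apply Proposition \ref{P:dual laumon} to $\mathcal{N}=V^{H}_{<0}\mathcal{M}[!H]$ (resp.\ $V^{H}_{0}\mathcal{M}[*H]$) and substitute the identification of $\mathbf{D}_{(X,H)}\mathcal{N}$ from Theorem \ref{T:dual smooth boundary}, whose strictness conclusion supplies the needed hypotheses. Your additional verification that the $V$-pieces are strict coherent $\widetilde{\mathscr{D}}_{(X,H)}$-modules is a correct and welcome elaboration of what the paper leaves implicit.
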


\bibliographystyle{alpha}
\bibliography{mybib}

\begin{thebibliography}{Moc15}

\bibitem[CM99]{CM99}
Francisco~J. Calder\'on-Moreno.
\newblock Logarithmic differential operators and logarithmic de {R}ham
  complexes relative to a free divisor.
\newblock {\em Ann. Sci. \'Ecole Norm. Sup. (4)}, 32(5):701--714, 1999.

\bibitem[Fuj17]{Fuj17}
Osamu Fujino.
\newblock On subadditivity of the logarithmic {K}odaira dimension.
\newblock {\em J. Math. Soc. Japan}, 69(4):1565--1581, 2017.

\bibitem[Moc15]{Moc15}
Takuro Mochizuki.
\newblock {\em Mixed twistor {$\mathscr{D}$}-modules}, volume 2125 of {\em
  Lecture Notes in Mathematics}.
\newblock Springer, Cham, 2015.

\bibitem[PS13]{PS13}
Mihnea Popa and Christian Schnell.
\newblock Generic vanishing theory via mixed {H}odge modules.
\newblock {\em Forum Math. Sigma}, 1:e1, 60, 2013.

\bibitem[PS14]{PS14}
Mihnea Popa and Christian Schnell.
\newblock Kodaira dimension and zeros of holomorphic one-forms.
\newblock {\em Ann. of Math. (2)}, 179(3):1109--1120, 2014.

\bibitem[PS17]{PS17}
Mihnea Popa and Christian Schnell.
\newblock Viehweg's hyperbolicity conjecture for families with maximal
  variation.
\newblock {\em Invent. Math.}, 208(3):677--713, 2017.

\bibitem[Sai88]{Sa88}
Morihiko Saito.
\newblock Modules de {H}odge polarisables.
\newblock {\em Publ. Res. Inst. Math. Sci.}, 24(6):849--995 (1989), 1988.

\bibitem[Sai90]{Sa90}
Morihiko Saito.
\newblock Mixed {H}odge modules.
\newblock {\em Publ. Res. Inst. Math. Sci.}, 26(2):221--333, 1990.

\bibitem[SS16]{SS16}
Claude Sabbah and Christian Schnell.
\newblock {\em The MHM project}.
\newblock 2016.

\bibitem[Wei17]{W17b}
Chuanhao Wei.
\newblock Logarithmic kodaira dimension and zeros of holomorphic log-one-forms.
\newblock {\em preprint}, 2017.

\bibitem[Wu16]{Wu16}
Lei Wu.
\newblock Multi-indexed deligne extensions and multiplier subsheaves.
\newblock {\em preprint}, 2016.

\end{thebibliography}
\end{document}